\numberwithin{equation}{section}
\newtheorem{theorem}{Theorem}[section]
\newtheorem{proposition}[theorem]{Proposition}
\newtheorem{definition}[theorem]{Definition}
\newtheorem{thmx}{Theorem}
\theoremstyle{definition}
\begin{document}
\title[Quadratic transformation:
an $\mathcal{LU}$ factorization approach]
{Quadratic transformation and matrix biorthogonal polynomials:
an $\mathcal{LU}$ factorization approach}


\author[K. K. Behera]{Kiran Kumar Behera}
\address{
Department of Mathematics,
Indian Institute of Science Bangalore-560012,
Karnataka, India}
\curraddr{}
\email{kiranbehera@iisc.ac.in}
\thanks
{This research is supported by the Dr. D. S. Kothari postdoctoral
fellowship scheme of University Grants Commission (UGC), India.
}



\keywords{LDU decomposition;
quadratic transformation; matrix biorthogonal polynomials;
Christoffel transform; Hankel symmetry}

\date{}

\dedicatory{}

\commby{}

\begin{abstract}
The manuscript presents the $LU$ approach to matrix
biorthogonal polynomials when all the even ordered entries
in the Gram matrix are zero.
This arises in case of a quadratic transformation which is briefly
discussed.
Further, the main diagonal
of the Gram matrix is a zero diagonal and we present the theory
that follows from this fact.
Precisely, we discuss the Christoffel transformation and matrix representations
of the kernel polynomials, usually called the ABC Theorem.
Finally, we provide an illustration of our results assuming the Gram matrix
has Hankel symmetry.
\end{abstract}

\maketitle

\section{Introduction}
\label{sec: Introduction}
In recent years, the Gaussian (or $\mathcal{L}\mathcal{U}$)
decomposition technique
has been used as the basis for an alternative approach
for instance, to the theories of generalized orthogonal polynomials,
multiple orthogonal polynomials, integrable systems and
multivariate orthogonal polynomials
\cite{Adler-group-factorization-1997,
Adler-momet-matrices-2009,
Manas-Ulysess-multiple-Gauss-Borel-2011,
Manas-multivariate-AdvM-2016}.
We specially refer to
\cite{Manas-CD-LU-Survey-2021} and references therein
for a systematic theory of $\mathcal{L}\mathcal{U}$
factorization in case of matrix biorthogonal polynomials.
This approach brings the Gram matrix with
respect to a bilinear functional
to the center of analysis and usually the
starting point is the $\mathcal{L}\mathcal{U}$
decomposition of the Gram matrix.

A bilinear form on the ring $\Pi_n[z]$ of polynomials in the variable $z$,
where $\Pi_n$ is the ring of $n\times n$ matrices, is the map
\begin{align*}
\langle\cdot,\cdot\rangle: \Pi_n[z]\times\Pi_n[z]\mapsto\Pi_n,
\end{align*}
satisfying the following properties
\begin{enumerate}[(i)]
\item
$\langle\mathcal{C}_1\mathcal{P}(z)+\mathcal{C}_2\mathcal{Q}(z),
\mathcal{R}(\omega)\rangle
=
\mathcal{C}_1\langle\mathcal{P}(z),\mathcal{R}(\omega)\rangle
+
\mathcal{C}_2\langle\mathcal{Q}(z),\mathcal{R}(\omega)\rangle$,
\item
$\langle\mathcal{P}(z),
\mathcal{C}_1\mathcal{Q}(\omega)+\mathcal{C}_2\mathcal{R}(\omega)\rangle
=
\langle\mathcal{P}(z),\mathcal{Q}(\omega)\rangle\mathcal{C}_1^{T}
+
\langle\mathcal{P}(z),\mathcal{R}(\omega)\rangle\mathcal{C}_2^{T}$,
\end{enumerate}
for any $\mathcal{C}_1$, $\mathcal{C}_2$ $\in\Pi_n$
and
$\mathcal{P}(z), \mathcal{Q}(z), \mathcal{R}(z)\in\Pi_n[z]$.
Further, if
\begin{align*}
\mathcal{P}(z)=\sum_{k=0}^{m}p_kz^k,
\quad
\mathcal{Q}(z)=\sum_{l=0}^{n}q_lz^l,
\quad
p_k,q_k\in\Pi_n,
\end{align*}
the bilinear form acts as
\begin{align*}
\langle\mathcal{P}(z), \mathcal{Q}(\omega)\rangle=
\sum_{k=0}^{m}\sum_{l=0}^{n}p_km_{kl}q_l,
\quad
m_{kl}=\langle z^{k}\mathcal{I}, \omega^{l}\mathcal{I}\rangle,
\end{align*}
where $\mathcal{I}$ is the identity matrix in $\Pi_n$.
This gives rise to the Gram matrix $\mathcal{M}$ where
\begin{align}
\label{eqn: gram matrix definition}
\mathcal{M}=[m_{i,j}]
=
\left(
  \begin{array}{ccc}
    m_{00} & m_{01}  & \cdots \\
    m_{10} & m_{11}  & \cdots \\
    \vdots & \vdots   & \ddots \\
  \end{array}
\right),
\quad
i,j\geq0.
\end{align}
We also note that if
$\mathcal{X}(z):=
(\mathcal{I}, z\mathcal{I}, z^2\mathcal{I}, \cdots)^{T}$,
then $\mathcal{M}$ has the alternative description
%
$\mathcal{M}=
\langle\mathcal{X}(z),\mathcal{X}(\omega)\rangle$.

The rudiments of this approach are as follows. The bilinear form $\langle\cdot, \cdot\rangle$
is said to be quasi-definite whenever the associated Gram matrix has all its leading principal
minors different from zero. In this case, the Gram matrix has a unique
$\mathcal{L}\mathcal{U}$  decomposition,
but in fact, is written in the form
$\mathcal{M}=\mathcal{L}_1^{-1}\mathcal{D}\mathcal{L}_2^{-T}$,
where $\mathcal{D}$ is a diagonal matrix,
and
$\mathcal{L}_1$, $\mathcal{L}_2$ are lower triangular matrices
with $\mathcal{I}$ as the diagonal entries.

Two vectors
$\mathcal{P}(z):=\mathcal{L}_1\mathcal{X}(z)$
and
$\mathcal{Q}(\omega):=\mathcal{L}_2\mathcal{X}(\omega)$
are defined so that if
\begin{align}
\label{eqn: definition of polynomial P and Q vectors}
\mathcal{P}(z)=\left(
                 \begin{array}{ccc}
                   p_0(z) & p_1(z) & \cdots \\
                 \end{array}
               \right)^T,
\quad
\mathcal{Q}(\omega)=\left(
                 \begin{array}{ccc}
                   q_0(\omega) & q_1(\omega) & \cdots \\
                 \end{array}
               \right)^T,
\end{align}
then $p_k(z)$ and $q_l(\omega)$ are
matrix polynomials whose coefficients are easily
determined from $\mathcal{L}_1$ and $\mathcal{L}_2$ respectively.
This serves as the foundation to discuss various properties related to
biorthogonality, spectral transformations like the Christoffel
and Geronimus transformations and so on.
We note down three such properties, or rather representations
for future reference.

Suppose the underlying bilinear form is quasi-definite.
Then we have the following representation
\cite{Manas-CD-LU-Survey-2021}
\begin{align}
\label{eqn: quasi-determinantal-pn(z)-introduction}
p_n(z)=\theta_{\ast}\left(
         \begin{array}{ccc|c}
            &  &  & 1 \\
            & \mathcal{M}^{[n]} &  & \vdots \\
            &  &  & z^{n-1} \\\hline
           m_{n,0} & \cdots & m_{n,n-1} & z^n \\
         \end{array}
       \right),
\quad
n=0,1,\cdots,
\end{align}
where the superscript $[n]$ denotes a truncated matrix
and
$\theta_{\ast}$ denotes the fact that $p_n(z)$
is, in fact, the Schur complement of $\mathcal{M}^{[n]}$
in the above matrix. A similar expression also exists for $q_n(\omega)$
and we will explain this in greater detail in Section~\ref{sec: Generic LDU}.
Further, given the biorthogonal matrix polynomial sequences
$\{p_n(z)\}$ and $\{q_n(\omega)\}$,
the $j^{th}$ Christoffel-Darboux kernel polynomial is defined as
\cite{Manas-CD-LU-Survey-2021}
\begin{align}
\label{eqn: definition kernel polynomial-introduction}
\mathcal{K}^{[j]}(z,\omega)=\sum_{k=0}^{j}q_k^{T}(\omega)d_{kk}^{-1}p_{k}(z),
\quad
\mathcal{D}=\mbox{diag}~(d_{00},\,d_{11},\,\cdots).
\end{align}
These matrix kernel polynomials also possess the following representation,
the so called ABC Theorem (named after Aitken, Berg and Collar)
\cite{Simon-CD-kernel-Survey}
\begin{align}
\label{eqn: definition kernel polynomial-ABC theorem-introduction}
\mathcal{K}^{[j]}(z,\omega)=(\mathcal{X}^{[j]}(\omega))^{T}[\mathcal{M}^{[j]}]^{-1}
\mathcal{X}^{[j]}(z),
\end{align}
which is a direct consequence of
\eqref{eqn: definition kernel polynomial-introduction}.

Henceforward, for ease of reference we will
call $m_{ij}$ the even (odd) ordered Gram entry
if $i+j$ is even (odd). Our primary concern
in the present manuscript is to discuss
the above factorization technique when all the
even ordered Gram entries are zero matrices.
Note that in such a case, the main diagonal of $\mathcal{M}$
is the zero diagonal and hence the underlying bilinear form is no
longer quasi-definite.
This forces the main diagonal of the matrix $\mathcal{D}$
in the ensuing $\mathcal{L}\mathcal{D}\mathcal{U}$ decomposition
to consist of zero entries and hence it has to be substituted appropriately.
This will lead to a revision of the definition
\eqref{eqn: definition kernel polynomial-introduction} of the
matrix kernel polynomials and consequently the ABC Theorem
\eqref{eqn: definition kernel polynomial-ABC theorem-introduction}.
Among other things, the quasi-determinant
representations \eqref{eqn: quasi-determinantal-pn(z)-introduction}
will also be no longer valid since for $p_0(z)$, $\mathcal{M}^{[0]}$ is the
zero matrix.
The precise goal of the present manuscript
is to address the above concerns.

The layout of the manuscript is as follows.
In rest of the present section, we briefly discuss the theory where
Gram matrices with even ordered Gram entries equal to zero arise.
In Section~\ref{sec: Generic LDU}, we obtain the $LDU$
factorization of such Gram matrices,
biorthogonality relations and representations of
matrix biorthogonal polynomials.
Section~\ref{sec: Christoffel transformation} demonstrates
the Christoffel transformation of such Gram matrices.
The associated matrix kernel polynomials followed by
the appropriate form of the ABC Theorem are presented.
Section~\ref{sec: Hankel symmetry} provides an illustration
of the results in the special case of Hankel symmetry.
\subsection{Unwrapping of measure}
The quadratic transformation $\lambda\mapsto\lambda^2$ is perhaps the
simplest case in the general theory of polynomial mappings.
One of the direction in which this has been studied is the following
\cite{Marcellan-OP-quadratic-1999}.
Given $\{p_n(\lambda)\}$, a monic orthogonal polynomial sequence (MOPS),
the problem is to find another MOPS
$\{s_n(\lambda)\}$ such that
$s_{2k}(\lambda)=p_k(t(\lambda))$, $k\geq0$, where $t(\lambda)$ is a monic polynomial
of degree 2.
The sequence $\{s_n(\lambda)\}$ is then completed by defining
$s_{2k+1}(\lambda)=(\lambda-a)p_{k}(t(\lambda))$.
For some other approaches, we refer to
\cite{Bessis-Moussa-iterated-mapping-CommMathPhy-1983,
Geronimo-Van-Assche-several-interval-TAMS-1988,
Ismail-sieved-III-TAMS-1986,
Ismail-Charris-Sieved-VII-TAMS-1993}.

The connection of the transformation $\lambda\mapsto\lambda^2$
with moment theory in the context of present manuscript arises in what is known
as unwrapping of measures.
To begin with, given a sequence of $p\times p$ Hermitian matrices,
$S_0, S_1, \cdots, S_{2n}$, the
truncated Hamburger (TH) moment problem
 is to find a Hermitian matrix measure
$\mu(u)$, $-\infty<u<\infty$, such that
\begin{align}
\label{eqn: matrix Hamburger problem}
S_k=\int_{-\infty}^{\infty}u^kd\mu(u),
\quad
k=0,1,\cdots,2n.
\end{align}
The Hamburger-Nevanlinna Theorem solves
\eqref{eqn: matrix Hamburger problem}
as an interpolation problem in the class $\mathcal{N}_p$
of Nevanlinna functions and is presented below.
\begin{thmx}[\cite{Akhiezer-book,Chen-matrixCF-LAA-1998,Kovalishina-1983}]
\label{theorem: Theorem A-introduction}
If $\mu(u), (-\infty<u<\infty)$ is a solution to the TH problem
\eqref{eqn: matrix Hamburger problem},
then there exists $\mathcal{F}(\lambda)\in\mathcal{N}_{p}$
such that
\begin{align}
\label{eqn: integral representation-introduction}
\mathcal{F}(\lambda)=
\int_{-\infty}^{\infty}\frac{1}{u-\lambda}d\mu(\lambda)
\end{align}
for which
\begin{align}
\label{eqn: asymptotic series-trucated-introduction}
\lim_{\lambda\rightarrow\infty}
\lambda^{2n+1}\left[
\mathcal{F}(\lambda)+\frac{S_0}{\lambda}+\frac{S_1}{\lambda^2}+\cdots+\frac{S_{2n-1}}{\lambda^{2n}}
\right]
=
-S_{2n}
\end{align}
uniformly in the sector
$\pi_{\epsilon}:=\epsilon<\arg{\lambda}<\pi-\epsilon$,
for some $\epsilon\in(0,\pi/2)$.
Conversely, if \eqref{eqn: asymptotic series-trucated-introduction}
holds, at least for $\lambda=iy$
$(y\rightarrow+\infty)$, for some $\mathcal{F}(\lambda)\in\mathcal{N}_p$,
then $\mathcal{F}(\lambda)$ has the representation
\eqref{eqn: integral representation-introduction},
where $\mu(u)$ has $2n+1$ moments $S_0, S_1, \cdots, S_{2n}$.
\end{thmx}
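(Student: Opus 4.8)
The plan is to prove the two implications separately, treating the matrix case exactly as the scalar one but with the positive-semidefinite (Löwner) ordering in place of inequalities. For the \emph{direct} implication, assume $\mu$ solves \eqref{eqn: matrix Hamburger problem}. The starting point would be the finite expansion of the Cauchy kernel,
\[
\frac{1}{u-\lambda}=-\sum_{k=0}^{2n}\frac{u^{k}}{\lambda^{k+1}}+\frac{u^{2n+1}}{\lambda^{2n+1}(u-\lambda)},
\]
valid for every real $u$ and every $\lambda\notin\mathbb{R}$. Integrating this identity against $d\mu$ and substituting $S_k=\int u^k\,d\mu$ from \eqref{eqn: matrix Hamburger problem} gives, after multiplication by $\lambda^{2n+1}$,
\[
\lambda^{2n+1}\!\left[\mathcal{F}(\lambda)+\frac{S_0}{\lambda}+\cdots+\frac{S_{2n-1}}{\lambda^{2n}}\right]=-S_{2n}+\int_{-\infty}^{\infty}\frac{u^{2n+1}}{u-\lambda}\,d\mu(u),
\]
so that the whole statement reduces to showing the remainder integral vanishes uniformly on $\pi_{\epsilon}$.

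To control the remainder I would write $u^{2n+1}/(u-\lambda)=u^{2n}\cdot u/(u-\lambda)$ and use the elementary sector bound $|u/(u-\lambda)|\le 1/\sin\epsilon$, which holds because the distance from $\lambda\in\pi_\epsilon$ to the real axis is at least $|\lambda|\sin\epsilon$; since $u^{2n}$ is $\mu$-integrable (its integral being $S_{2n}$), dominated convergence forces the integral to $0$ pointwise. Uniformity would follow by splitting at $|u|=M$, estimating the inner part by $M\,S_{2n}/(|\lambda|\sin\epsilon)$ and making the tail arbitrarily small using the finiteness of $S_{2n}$. Finally, $\mathcal{F}\in\mathcal{N}_{p}$ holds automatically, since for $\lambda$ in the upper half-plane $\operatorname{Im}\mathcal{F}(\lambda)=\operatorname{Im}\lambda\int|u-\lambda|^{-2}\,d\mu(u)\ge 0$.

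For the \emph{converse}, the natural device is the Nevanlinna representation of a matrix function $\mathcal{F}\in\mathcal{N}_{p}$,
\[
\mathcal{F}(\lambda)=A+B\lambda+\int_{-\infty}^{\infty}\!\left(\frac{1}{u-\lambda}-\frac{u}{1+u^2}\right)d\sigma(u),
\]
with $A=A^{*}$, $B\ge 0$ and a Hermitian matrix measure $\sigma$ satisfying $\int(1+u^2)^{-1}d\sigma<\infty$. The hypothesis \eqref{eqn: asymptotic series-trucated-introduction}, even along the single ray $\lambda=iy$, forces $\mathcal{F}(iy)=O(1/y)$, which immediately kills the linear term ($B=0$), while $\lim_{y\to\infty}iy\,\mathcal{F}(iy)=-S_0$ will fix the constant $A$ once $\sigma$ is known to be finite. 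The first moment count already appears here: from $y\operatorname{Im}\mathcal{F}(iy)=\int \frac{y^2}{u^2+y^2}\,d\sigma$ and monotone convergence one gets $\int d\sigma=S_0<\infty$.

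The decisive step, and the place I expect the real difficulty, is upgrading this \emph{one-sided} asymptotic data into the existence of all matrix moments $\int u^k\,d\sigma=S_k$ for $k=0,\dots,2n$. Here I would exploit positivity: using identities such as
\[
\int_{-\infty}^{\infty}\frac{u^{2}y^{2}}{u^{2}+y^{2}}\,d\sigma=y^{2}S_0-y^{3}\operatorname{Im}\mathcal{F}(iy),
\]
together with the expansion of $\operatorname{Im}\mathcal{F}(iy)$ read off from \eqref{eqn: asymptotic series-trucated-introduction}, the hypothesised limits convert into a family of truncated even-moment integrals that stays bounded as $y\to\infty$; a Fatou/monotone-convergence argument (applied on the scalar measure $\operatorname{tr}\sigma$ to handle the matrix case) then promotes these uniform bounds to genuine finite even moments, after which the odd moments follow from the same finite Cauchy-kernel expansion used in the direct part. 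Once every moment up to order $2n$ is finite, the normalizing term $\int u(1+u^2)^{-1}d\sigma$ converges, equals $A$, and cancels, leaving $\mathcal{F}(\lambda)=\int_{-\infty}^{\infty}(u-\lambda)^{-1}d\sigma(u)$, which is precisely \eqref{eqn: integral representation-introduction} with $\mu=\sigma$ carrying the prescribed moments $S_0,\dots,S_{2n}$.
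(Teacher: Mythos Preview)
The paper does not prove this statement at all: Theorem~A is quoted as a known result from \cite{Akhiezer-book,Chen-matrixCF-LAA-1998,Kovalishina-1983} and is used purely as background motivation for the quadratic transformation $\lambda\mapsto\lambda^2$ that generates Gram matrices with zero even-ordered entries. There is therefore no ``paper's own proof'' to compare your proposal against.

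That said, your proposal follows precisely the classical route found in the cited references (Akhiezer for the scalar case, Kovalishina and Chen--Hu for the matrix extension): the finite Cauchy-kernel expansion and sector remainder estimate for the direct implication, and the Nevanlinna integral representation plus the Hamburger bootstrap on $\operatorname{Im}\mathcal{F}(iy)$ for the converse. The argument is sound in outline; the one place to be careful is the inductive step for the even moments in the converse, where you should make explicit that
\[
\int_{-\infty}^{\infty}\frac{u^{2m}y^{2}}{u^{2}+y^{2}}\,d\sigma(u)
\]
is monotone increasing in $y$ (in the L\"owner order) and that its limit, read off from the assumed asymptotics, equals $S_{2m}$; monotone convergence on the trace measure then gives $\int u^{2m}\,d\sigma=S_{2m}$ for each $m\le n$. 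Once the even moments are in hand, the odd ones follow by re-running the finite expansion as you indicate. This is exactly the argument in \cite{Akhiezer-book}, so your proposal is a faithful reconstruction of the standard proof rather than a departure from it.
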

If the relation \eqref{eqn: asymptotic series-trucated-introduction}
holds for all $n=0,1,\cdots$, we have the asymptotic expansion
\begin{align}
\label{eqn: asymptotic series-for-all-n-introduction}
\mathcal{F}(\lambda)\sim
-\frac{S_0}{\lambda}-\frac{S_1}{\lambda^2}-\cdots-\frac{S_{2n-1}}{\lambda^{2n}}-
\cdots,
\quad\lambda\in\pi_{\epsilon}.
\end{align}
Hence, if we let $\mathcal{H}(\lambda)=\mathcal{F}(\lambda^2)$,
then from \eqref{eqn: asymptotic series-for-all-n-introduction}, we have
\begin{align*}
\mathcal{H}(\lambda)\sim
-\frac{h_0}{\lambda}-\frac{h_1}{\lambda^2}-\cdots-\frac{h_{2n-1}}{\lambda^{2n}}-
\cdots,
\quad\lambda\in\pi_{\epsilon},
\end{align*}
where $h_{2k=0}$, $h_{2k+1}=S_k$, $k\geq0$, giving rise to the following
moment matrix
\begin{align}
\label{eqn: definition of Gram matrix with even zero}
\mathcal{M}
=
\left(
  \begin{array}{ccccc}
    0 & m_{0,1} & 0 & m_{0,3} & \cdots \\
    m_{1,0} & 0 & m_{1,2} & 0 & \cdots \\
    0 & m_{2,1} & 0 & m_{2,3} & \cdots \\
    m_{3,0} & 0 & m_{3,2} & 0 & \cdots \\
    \vdots & \vdots & \vdots & \vdots & \ddots \\
  \end{array}
\right),
\quad
m_{i,j}=h_{i+j},
\quad
i,j=0,1,\cdots.
\end{align}
The motivation to introduce quadratic transformation in
\eqref{eqn: asymptotic series-for-all-n-introduction}
comes from the fact that
if we consider the problem \eqref{eqn: matrix Hamburger problem}
over the interval $[0,\infty)$,
we have the truncated Stieltjes (TS) moment problem
which is solved by $\varphi(\lambda)$ in the Stieltjes class
$\mathcal{S}_p$.
A result like Theorem~\ref{theorem: Theorem A-introduction} exists
\cite{Narcowich-R-operators-IndianaJ-1977}
for the TS problem in which
$\varphi(\lambda)$ satisfies
\eqref{eqn: asymptotic series-trucated-introduction}
and
\eqref{eqn: matrix Hamburger problem}, but over the interval $[0,\infty)$.
Further, it is well known that
\cite{Maxim-polynomial-mapping-JAT-2013,
Simon-moment-self-adjoint-Advances-1998,Wall-book}
\begin{align*}
\varphi(\lambda)=\int_{0}^{\infty}\frac{1}{u-\lambda}d\mu(u)
\Longrightarrow
\lambda\varphi(\lambda^2)=\int_{-\infty}^{\infty}\frac{\mbox{sign} u}{2}\frac{1}{u-\lambda}d\mu(u^2),
\end{align*}
thus reducing a Stieltjes moment problem to a Hamburger one.
The quadratic transformation $\lambda\mapsto\lambda^2$
is the first part of this phenomenon of unwrapping of measures and provides the
background for the problem under consideration.

\section{The generic $LDU$ decomposition}
\label{sec: Generic LDU}
In this section we find the
$\mathcal{L}\mathcal{D}\mathcal{U}$
decomposition of the Gram matrix $\mathcal{M}$
(without assuming Hankel symmetry)
given by
\eqref{eqn: definition of Gram matrix with even zero},
where $\mathcal{L}$, $\mathcal{D}$, $\mathcal{U}$
have the forms
\begin{align*}
\mathcal{L}
=
\left(
  \begin{array}{cccc}
    \mathcal{I}_2 & 0 & 0 & \cdots \\
    \mathcal{L}_{10} & \mathcal{I}_2 & 0 & \cdots \\
    \mathcal{L}_{20} &\mathcal{L}_{21} & \mathcal{I}_{2} & \cdots \\
    \vdots & \vdots & \vdots & \ddots \\
  \end{array}
\right),
\quad
\mathcal{U}
=
\left(
  \begin{array}{cccc}
    \mathcal{I}_2 & \mathcal{U}_{01} & \mathcal{U}_{02} & \cdots \\
    0 & \mathcal{I}_2 & \mathcal{U}_{12} & \cdots \\
    0 & 0 & \mathcal{I}_2 & \cdots \\
    \vdots & \vdots & \vdots & \ddots \\
  \end{array}
\right)
\end{align*}
and
\begin{align*}
\mathcal{D}
=
\left(
  \begin{array}{cccc}
    \mathcal{D}_{00} & 0 & 0 & \cdots \\
    0 & \mathcal{D}_{11} & 0 & \cdots \\
    0 & 0 & \mathcal{D}_{22} & \cdots \\
    \vdots & \vdots & \vdots & \ddots\\
  \end{array}
\right),
\end{align*}
with $l_{i,j}, u_{i,j}, d_{i,i}\in\Pi_n$ and
\begin{align}
\label{eqn: form of constituent matrices}
\begin{split}
\mathcal{L}_{ij}
&=
\left(
  \begin{array}{cc}
    l_{2i,2j} & 0 \\
    0 & l_{2i+1,2j+1} \\
  \end{array}
\right),
\quad
\mathcal{U}_{ij}
=
\left(
  \begin{array}{cc}
    u_{2i,2j} & 0 \\
    0 & u_{2i+1,2j+1} \\
  \end{array}
\right),
\\
\mathcal{D}_{ii}
&=
\left(
  \begin{array}{cc}
    0 & d_{2i,2i+1} \\
    d_{2i+1,2i} & 0 \\
  \end{array}
\right),
\quad
\mathcal{I}_{2}
=
\left(
  \begin{array}{cc}
    \mathcal{I} & 0 \\
    0 & \mathcal{I}\\
  \end{array}
\right).
\end{split}
\end{align}
Observe that $\mathcal{L}$ and $\mathcal{U}$ are block triangular
while $\mathcal{D}$ is block diagonal. Hence one of the ways to proceed
is to opt for the block $\mathcal{L}\mathcal{D}\mathcal{U}$
factorization.
%
But, because of the special structure of the constituent blocks,
we actually find the expressions for $l_{ij}$, $u_{ij}$ and $d_{ij}$.

We do this in two steps. First, we let
$\mathcal{H}=(h_{ij})=\mathcal{L}\mathcal{D}$
so that from $\mathcal{M}=\mathcal{H}\mathcal{U}$,
we have using the fact that $u_{l,l}=\mathcal{I}$
\begin{align}
\label{eqn: relations for m-2j and m-2j+1}
\begin{split}
m_{i,2k}
&=
h_{i,0}u_{0,2k}+h_{i,2}u_{2,2k}+\cdots+h_{i,2k},
\quad
i=1,3\cdots,
\\
m_{i,2k+1}
&=
h_{i,1}u_{1,2k+1}+h_{i,3}u_{3,2k+1}+\cdots+h_{i,2k+1},
\quad
i=0,2\cdots,
\end{split}
\end{align}
for $k=0,1,2,\cdots,$. Using an algorithmic approach, we determine $h_{ij}$ and $u_{ij}$
from \eqref{eqn: relations for m-2j and m-2j+1}. In the second
step, we determine $l_{ij}$ and $d_{ij}$ from $h_{ij}$.

To proceed, let us work out the special cases $m_{i,2}$ and $m_{i,4}$.
We will repeatedly use the facts that
$h_{i,j}=0$ if $j=i+1$ for $i=1,3,5,\cdots$ or $i+j$ is even or $j\geq i+2$.
Since $m_{i,2}=h_{i,0}u_{0,2}+h_{i,2}$, we have
$m_{1,2}=h_{1,0}u_{0,2}$ for $i=1$ which implies
$u_{0,2}=h_{1,0}^{-1}m_{1,2}$.
Then,
\begin{align*}
h_{i,2}=m_{i,2}-h_{i,0}h_{1,0}^{-1}m_{1,2}=
\theta_{\ast}\left(
               \begin{array}{cc}
                 h_{1,0} & m_{1,2} \\
                 h_{i,0} & m_{i,2} \\
               \end{array}
             \right)
=
\theta_{i,2}^{(2)},
\quad\mbox{say}.
\end{align*}
Next, from $m_{i,4}=h_{i,0}u_{0,4}+h_{i,2}u_{2,4}+h_{i,4}$,
we substitute $i=1$ to obtain (since $h_{1,2}=h_{1,4}=0$)
$u_{0,4}=h_{1,0}^{-1}m_{1,4}$.
Observe that $h_{i,2}$ has been obtained in the previous step for $m_{i,2}$.
So we put $i=3$ to obtain
\begin{align*}
h_{3,2}u_{2,4}=
m_{3,4}-h_{3,0}h_{1,0}^{-1}m_{1,4}=
\theta_{\ast}
\left(
  \begin{array}{cc}
    h_{1,0} & m_{1,4} \\
    h_{3,0} & m_{3,4} \\
  \end{array}
\right)
=
\theta_{3,4}^{(2)}, \quad\mbox{say}.
\end{align*}
Then, $u_{2,4}=h_{3,2}^{-1}\theta_{3,4}^{(2)}$. Finally,
\begin{align*}
h_{i,4}
=
m_{i,4}-h_{i,0}u_{0,4}-h_{i,2}u_{2,4}
=
\theta_{\ast}
\left(
  \begin{array}{cc}
    h_{1,0} & m_{1,4} \\
    h_{i,0} & m_{i,4} \\
  \end{array}
\right)
-
h_{i,2}h_{3,2}^{-1}\theta_{3,4}^{(2)}.
\end{align*}
Then, proceeding as in the case of $m_{1,2}$ we have
\begin{align*}
h_{i,4}=
\theta_{\ast}
\left(
  \begin{array}{cc}
    h_{3,2} & \theta_{3,4}^{(2)} \\
    h_{i,2} & \theta_{i,4}^{(2)} \\
  \end{array}
\right)
=
\theta_{i,4}^{(4)},
\quad\hbox{say, where}
\theta_{i,4}^{(2)}:=
\theta_{\ast}
\left(
  \begin{array}{cc}
    h_{1,0} & m_{1,4} \\
    h_{i,0} & m_{i,4} \\
  \end{array}
\right).
\end{align*}
Now, it is matter of induction to prove the following.
\begin{theorem}
Define for $i=1,3,5,\cdots$
\begin{align*}
\theta_{i,2k}^{(2j)}=
\theta_{\ast}
\left(
  \begin{array}{cc}
    h_{2j-1,2j-2} & \theta_{2j-1,2k}^{(2j-2)} \\
    h_{i,2j-2} & \theta_{i,2k}^{(2j-2)} \\
  \end{array}
\right),
\quad
j=1,2,\cdots,
\quad
k=0,1,\cdots,
\end{align*}
where $\theta_{i,2k}^{(0)}=m_{i,2k}$.
Then
\begin{align}
\label{eqn: induction hypothesis theorem}
h_{i,2l}
=\theta_{i,2l}^{(2l)},
\quad
u_{2l,2k}
=
h_{2l+1,2l}^{-1}\theta_{2l+1,2k}^{(2l)},
\quad
l=0,1,\cdots.
\end{align}
\end{theorem}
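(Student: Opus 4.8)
The plan is to establish both identities in \eqref{eqn: induction hypothesis theorem} by induction on $l$, with the combinatorial core being an auxiliary ``residual'' reading of the quasi-determinants $\theta_{i,2k}^{(2j)}$. Writing $\theta_{\ast}$ explicitly as a Schur complement, the defining relation in the statement is
\[
\theta_{i,2k}^{(2j)} = \theta_{i,2k}^{(2j-2)} - h_{i,2j-2}\,h_{2j-1,2j-2}^{-1}\,\theta_{2j-1,2k}^{(2j-2)}.
\]
The claim I would isolate first is that, for every $j$,
\[
\theta_{i,2k}^{(2j)} = m_{i,2k} - \sum_{p=0}^{j-1} h_{i,2p}\,u_{2p,2k}, \qquad (\star)
\]
i.e. $\theta_{i,2k}^{(2j)}$ is precisely the partial residual obtained after eliminating the first $j$ contributions in the first relation of \eqref{eqn: relations for m-2j and m-2j+1}.

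To prove $(\star)$ I would induct on $j$. The base $j=0$ is the definition $\theta_{i,2k}^{(0)}=m_{i,2k}$. For the step I substitute the second formula of \eqref{eqn: induction hypothesis theorem} at the previous level, namely $u_{2j-2,2k}=h_{2j-1,2j-2}^{-1}\theta_{2j-1,2k}^{(2j-2)}$, into the Schur-complement recursion above; this collapses it to $\theta_{i,2k}^{(2j)} = \theta_{i,2k}^{(2j-2)} - h_{i,2j-2}u_{2j-2,2k}$, and combining with the residual identity already known at level $j-1$ yields $(\star)$. Thus $(\star)$ and the $u$-formula are genuinely coupled and must be carried together through the induction.

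With $(\star)$ in hand, both identities fall out by specialisation. For the first, take $k=l$, $j=l$: since $u_{2l,2l}=\mathcal{I}$, the full relation $m_{i,2l}=\sum_{p=0}^{l}h_{i,2p}u_{2p,2l}$ gives $h_{i,2l}=m_{i,2l}-\sum_{p=0}^{l-1}h_{i,2p}u_{2p,2l}=\theta_{i,2l}^{(2l)}$. For the second, take $i=2l+1$, $j=l$: here I invoke the triangularity facts recorded before the theorem, so that $h_{2l+1,2p}=0$ for $p>l$, whence $m_{2l+1,2k}=\sum_{p=0}^{l}h_{2l+1,2p}u_{2p,2k}$ and $h_{2l+1,2l}u_{2l,2k}=m_{2l+1,2k}-\sum_{p=0}^{l-1}h_{2l+1,2p}u_{2p,2k}=\theta_{2l+1,2k}^{(2l)}$, which is the $u$-formula after left multiplication by $h_{2l+1,2l}^{-1}$. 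The base case $l=0$ is immediate: $m_{i,0}=h_{i,0}$ gives $h_{i,0}=\theta_{i,0}^{(0)}$, and $m_{1,2k}=h_{1,0}u_{0,2k}$ (all higher $h_{1,2p}$ vanishing) gives $u_{0,2k}=h_{1,0}^{-1}m_{1,2k}$.

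The step I expect to be the main obstacle is keeping the interdependence well-founded: the symbol $h_{i,2j-2}$ appears inside the definition of $\theta_{i,2k}^{(2j)}$, yet the theorem simultaneously asserts $h_{i,2j-2}=\theta_{i,2j-2}^{(2j-2)}$, so one must verify that every quantity entering level $2j$ is already fixed at levels $\le 2j-2$, and that the $u$-formula needed to validate $(\star)$ at level $j$ is genuinely supplied by the induction hypothesis at level $j-1$ and not tacitly assumed at level $j$. The remaining care is purely bookkeeping the vanishing pattern of $h_{i,j}$ (odd $i+j$, $j\le i+1$, and the extra vanishing of $h_{i,i+1}$ for odd $i$) so that all finite sums truncate exactly where the Schur-complement recursion predicts; the analogous odd-column identities for $h_{i,2l+1}$ and $u_{2l+1,2k+1}$ follow by repeating the argument on the second relation in \eqref{eqn: relations for m-2j and m-2j+1}.
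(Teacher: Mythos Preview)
Your proposal is correct and follows essentially the same route as the paper: both argue by induction on $l$, use the Schur-complement form $\theta_{i,2k}^{(2j)}=\theta_{i,2k}^{(2j-2)}-h_{i,2j-2}h_{2j-1,2j-2}^{-1}\theta_{2j-1,2k}^{(2j-2)}$, and exploit the triangularity of $h$ to truncate the sums in \eqref{eqn: relations for m-2j and m-2j+1}. Your residual identity $(\star)$ is precisely the telescoping that the paper indicates with ``$\cdots$'' in its chain of equalities; naming it explicitly makes the well-foundedness of the coupled induction more transparent, but the underlying argument is the same.
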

\begin{proof}
Observe that we have actually proved the theorem for the cases
$j=0,1$ and $k,l=0,1,2$
in the preceding discussion.
Hence, we assume that
\eqref{eqn: induction hypothesis theorem} holds for
$l=0,1,\cdots,j-1$ and $k=j$
and prove that the same holds for $l=j$.
For a simpler form, we note
\begin{align}
\label{eqn: simpler form for theta-induction}
\theta_{i,2k}^{(2j)}=
\theta_{i,2k}^{(2j-2)}-h_{i,2j-2}h_{2j-1,2j-2}^{-1}
\theta_{2j-1,2k}^{(2j-2)},
\quad
\theta_{i,2k}^{(0)}=m_{i,2k}.
\end{align}
We first obtain $u_{2j,2k}$
from the relation for $m_{i,2k}$
in \eqref{eqn: relations for m-2j and m-2j+1}
by substituting $i=2j+1$, $j<k$.
Recall that $h_{2j+1,2j+2}=0$ and $h_{2j+1,\ast}=0$ for $\ast-(2j+1)>1$.
Then,
\begin{align*}
h_{2j+1,2j}u_{2j,2k}
&=
m_{2j+1,2k}-h_{2j+1,0}u_{0,2k}-h_{2j+1,2}u_{2,2k}-
\cdots-h_{2j+1,2j-2}u_{2j-2,2k},
\\
&=
\theta_{2j+1,2k}^{(2)}-h_{2j+1,2}h_{3,2}^{-1}\theta_{3,2k}^{(2)}-\cdots-h_{2j+1,2j-2}u_{2j-2,2k},
\\
&=\cdots
\\
&=\theta_{2j+1,2k}^{(2j-2)}-h_{2j+1,2j-2}h_{2j-1,2j-2}^{-1}\theta_{2j-1,2k}^{(2j-2)}
=\theta_{2j+1,2k}^{(2j)}
\end{align*}
This gives
$u_{2j,2k}=h_{2j+1,2j}^{-1}\theta_{2j+1,2k}^{(2j)}$,
thereby proving the expression
for $u_{2l,2k}$ for $l=j$.
With $u_{2j,2k}$ determined, we put $k=j$ in the relation for $m_{i,2k}$
from \eqref{eqn: relations for m-2j and m-2j+1}
to obtain similarly
\begin{align*}
h_{i,2j}
&=
\theta_{i,2j}^{(0)}-h_{i,0}h_{1,0}^{-1}\theta_{1,2j}^{(0)}-h_{i,2}u_{2,2j}-
h_{i,4}u_{4,2j}-\cdots-h_{i,2j-2}u_{2j-2,2j},
\\
&=\cdots
\\
&=\theta_{i,2j}^{(2j-2)}-h_{i,2j-2}h_{2j-1,2j-2}^{-1}\theta_{2j-1,2j}^{(2j-2)}
=\theta_{i,2j}^{(2j)}.
\end{align*}
This proves the relation
\eqref{eqn: induction hypothesis theorem} for $l=j$, thus
completing the proof.
\end{proof}
\subsection{Biorthogonality relations}
Having decomposed the moment matrix
\eqref{eqn: definition of Gram matrix with even zero}
which henceforward, we write in the form
$\mathcal{M}=\mathcal{L}_1^{-1}\mathcal{T}\mathcal{L}_2^{-T}$,
let us define two block vectors with polynomial entries as
\begin{align}
\label{eqn: definition of P and Q with L1-L2-X(z)-X(omega)}
\mathcal{P}(z)=\mathcal{L}_1\mathcal{X}(z),
\quad
\mathcal{Q}(\omega)=\mathcal{L}_2\mathcal{X}(\omega),
\end{align}
where $\mathcal{P}(z)$ and $\mathcal{Q}(\omega)$
are as in \eqref{eqn: definition of polynomial P and Q vectors}.
Explicitly for $\mathcal{P}(z)$ we have
\begin{align*}
\left(
  \begin{array}{ccccc}
    \mathcal{I} & 0 & 0 & 0 & \cdots \\
    0 & \mathcal{I} & 0 & 0 & \cdots \\
    L_{20}^{(1)} & 0 & \mathcal{I} & 0 & \cdots \\
    0 & L_{31}^{(1)} & 0 & \mathcal{I} & \cdots \\
    \vdots & \vdots & \vdots & \vdots & \ddots \\
  \end{array}
\right)
\left(
  \begin{array}{c}
    I \\
    z I \\
    z^2 I \\
    z^3 I \\
    \cdots \\
  \end{array}
\right)
=
\left(
  \begin{array}{l}
    L_{00}^{(1)} \\
    L_{11}^{(1)}z \\
    L_{20}^{(1)}+L_{22}^{(1)}z^2 \\
    L_{31}^{(1)}z+L_{33}^{(1)}z^3 \\
    \vdots \\
  \end{array}
\right),
\end{align*}
which leads to the forms
\begin{align*}
p_{2j}(z)
&=
\mathcal{L}_{2j,0}^{(1)}+\mathcal{L}_{2j,2}^{(1)}z^2+\cdots+z^{2j},
\\\nonumber
p_{2j+1}(z)
&=
\mathcal{L}_{2j+1,1}^{(1)}z+\mathcal{L}_{2j+1,3}^{(1)}z^3+\cdots+z^{2j+1},
\quad
j=0,1,\cdots
\end{align*}
A similar expression for $\mathcal{Q}(\omega)$ is
\begin{align*}
q_{2j}(\omega)
&=
\mathcal{L}_{2j,0}^{(2)}+\mathcal{L}_{2j,2}^{(2)}\omega^2+\cdots+\omega^{2j},
\\\nonumber
q_{2j+1}(z)
&=
\mathcal{L}_{2j+1,1}^{(2)}\omega+\mathcal{L}_{2j+1,3}^{(2)}\omega^3+\cdots+\omega^{2j+1},
\quad
j=0,1,\cdots
\end{align*}
We note that while $p_{2j}(z)$ and $q_{2j}(\omega)$ contain only even powers of
$z$ and $\omega$ respectively,
$p_{2j+1}(z)$ and $q_{2j+1}(\omega)$ have only the odd powers.
This difference will be reflected throughout the manuscript, where
we derive results separately for the indices $2j$ and $2j+1$.
A fundamental reason for this, as will be observed, is that the matrix
$\mathcal{D}$ is no longer a diagonal but a block diagonal matrix
of $2\times 2$ blocks with matrix entries.

We begin with the following.
\begin{proposition}
The matrix polynomial sequences
$\{p_n(z)\}_{n=0}^{\infty}$ and $\{q_n(\omega)\}_{n=0}^{\infty}$
satisfy the relations
\begin{align}
\label{eqn: relations-biorthogonality between p and q}
\begin{split}
\langle p_{2j}, q_k\rangle &=d_{2j,2j+1}\delta_{2j+1,k},
\\
\langle p_{2j+1}, q_k\rangle &=d_{2j+1,2j}\delta_{2j,k},
\quad
j,k=0,1,2,\cdots
\end{split}
\end{align}
\end{proposition}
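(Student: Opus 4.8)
The plan is to compute the entire array of pairings $\langle\mathcal{P}(z),\mathcal{Q}(\omega)\rangle$ in one stroke and then read off its scalar-block entries, rather than verifying each relation in \eqref{eqn: relations-biorthogonality between p and q} separately. The starting point is the definition \eqref{eqn: definition of P and Q with L1-L2-X(z)-X(omega)}, namely $\mathcal{P}(z)=\mathcal{L}_1\mathcal{X}(z)$ and $\mathcal{Q}(\omega)=\mathcal{L}_2\mathcal{X}(\omega)$. Since the coefficient blocks of $\mathcal{L}_1$ and $\mathcal{L}_2$ lie in $\Pi_n$, I would peel them off the bilinear form using its defining properties: linearity in the first slot (property (i)) lets me pull $\mathcal{L}_1$ out on the left unchanged, while property (ii) pulls $\mathcal{L}_2$ out on the right as a transpose. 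The one point demanding care here is exactly that transpose: property (ii) returns $\langle\cdot,\mathcal{C}\mathcal{R}\rangle=\langle\cdot,\mathcal{R}\rangle\mathcal{C}^{T}$, so the right factor must emerge as $\mathcal{L}_2^{T}$ and not $\mathcal{L}_2$.

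Carrying this out and invoking the alternative description $\mathcal{M}=\langle\mathcal{X}(z),\mathcal{X}(\omega)\rangle$ of the Gram matrix, I obtain
\begin{align*}
\langle\mathcal{P}(z),\mathcal{Q}(\omega)\rangle
=\mathcal{L}_1\langle\mathcal{X}(z),\mathcal{X}(\omega)\rangle\mathcal{L}_2^{T}
=\mathcal{L}_1\mathcal{M}\mathcal{L}_2^{T}.
\end{align*}
At this stage I substitute the factorization $\mathcal{M}=\mathcal{L}_1^{-1}\mathcal{T}\mathcal{L}_2^{-T}$ fixed just before the proposition. The triangular factors telescope, $\mathcal{L}_1\mathcal{L}_1^{-1}=\mathcal{I}$ and $\mathcal{L}_2^{-T}\mathcal{L}_2^{T}=\mathcal{I}$, leaving the clean identity $\langle\mathcal{P}(z),\mathcal{Q}(\omega)\rangle=\mathcal{T}$; that is, the full pairing array equals $\mathcal{D}$.

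The proposition then follows by comparing entries. The $(n,k)$ block of $\langle\mathcal{P}(z),\mathcal{Q}(\omega)\rangle$ is precisely $\langle p_n(z),q_k(\omega)\rangle$, so $\langle p_n,q_k\rangle$ equals the $(n,k)$ entry of $\mathcal{D}$. From \eqref{eqn: form of constituent matrices}, $\mathcal{D}$ is block diagonal with each diagonal block $\mathcal{D}_{ii}$ antidiagonal, carrying $d_{2i,2i+1}$ in position $(2i,2i+1)$ and $d_{2i+1,2i}$ in position $(2i+1,2i)$ and zeros elsewhere. Hence in row $2j$ the only nonzero entry sits at column $2j+1$, giving $\langle p_{2j},q_k\rangle=d_{2j,2j+1}\delta_{2j+1,k}$, and in row $2j+1$ the only nonzero entry sits at column $2j$, giving $\langle p_{2j+1},q_k\rangle=d_{2j+1,2j}\delta_{2j,k}$, which are the asserted relations.

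I expect the only genuine obstacle to be bookkeeping rather than conceptual: keeping the transpose in property (ii) straight so that it is $\mathcal{L}_2^{-T}\mathcal{L}_2^{T}$ that cancels to the identity, and correctly translating the $2\times 2$ antidiagonal block pattern of $\mathcal{D}$ into the index shift $k=2j\pm 1$ that produces the two Kronecker deltas. No quasi-definiteness is needed, since the argument uses only the existence of the $\mathcal{L}\mathcal{D}\mathcal{U}$ factorization established in the preceding theorem.
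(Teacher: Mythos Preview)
Your proposal is correct and follows essentially the same route as the paper: compute $\langle\mathcal{P}(z),\mathcal{Q}(\omega)\rangle=\mathcal{L}_1\langle\mathcal{X}(z),\mathcal{X}(\omega)\rangle\mathcal{L}_2^{T}=\mathcal{L}_1\mathcal{M}\mathcal{L}_2^{T}=\mathcal{D}$ via the factorization, then read off the $(i,j)$ entries from the antidiagonal $2\times2$ block structure of $\mathcal{D}$ in \eqref{eqn: form of constituent matrices}. The only difference is that you spell out more carefully the transpose bookkeeping and the index translation, which the paper leaves implicit.
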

\begin{proof}
Using the properties of the bilinear form $\langle\cdot,\cdot\rangle$
and the definition
\eqref{eqn: definition of P and Q with L1-L2-X(z)-X(omega)}
we have
\begin{align*}
\langle p_i, q_j\rangle
&=
\langle \mathcal{P}_i(z),\mathcal{Q}_{j}(\omega)\rangle
=
\langle (\mathcal{L}_1\mathcal{X}(z))_{i},(\mathcal{L}_2\mathcal{X}(\omega))_{j}\rangle
\\
&=
\left(\mathcal{L}_1\langle \mathcal{X}(z),\mathcal{X}(\omega)\rangle(\mathcal{L}_2^{T}\right)_{ij}
=\mathcal{D}_{ij}
\end{align*}
since $\mathcal{M}=\mathcal{L}_1^{-1}\mathcal{D}\mathcal{L}_2^{-T}$.
The relations
\eqref{eqn: relations-biorthogonality between p and q}
follow from $\mathcal{D}$ being a block diagonal matrix with the
constituent blocks given by
\eqref{eqn: form of constituent matrices}.
\end{proof}
Similar relations hold for $q_n(\omega)$ too
\begin{align}
\label{eqn: relations-biorthogonality between q and p}
\langle p_{k}(z), q_{2j}(\omega)\rangle &=d_{2j+1,2j}\delta_{2j+1,k},
\\\nonumber
\langle p_{k}(z), q_{2j+1}(\omega)\rangle &=d_{2j,2j+1}\delta_{2j,k},
\quad
j,k=0,1,2,\cdots.
\end{align}
Here, we emphasise the relations
\eqref{eqn: relations-biorthogonality between p and q}
and
\eqref{eqn: relations-biorthogonality between q and p}
show that the sequence of matrix polynomials $\{p_i(z)\}$
and $\{q_i(z)\}$ are not biorthogonal
in the usual sense by which we mean
\begin{align*}
\langle p_{2j}(z),q_{2j}(z)\rangle=0
=
\langle p_{2j+1}(z),q_{2j+1}(z)\rangle,
\quad
j=0,1,\cdots.
\end{align*}
Another way to view this fact is to note that the above bilinear form
will always involve only the even powers of $z$ leading to even ordered
Gram entries which are zero by definition.
\subsection{Quasideterminant representation}
The theory of quasideterminants originated in attempts to define a determinant
for matrices with entries over a non-commutative ring
\cite{Quasideterminant-Gelfand-AdvMaths-2005}.
A quasideterminant is not actually an analogue of the commutative determinant,
but rather of the ratio of the determinant of an $n\times n$ matrix to the determinant
of an $(n-1)\times(n-1)$ sub-matrix.
This provides the crucial link when we want to find the matrix analogues of the
determinant representations of monic orthogonal polynomials with
scalar coefficients.

Let $\mathcal{V}$ be a $2\times2$ matrix with entries
$v_{ij}$, $i,j=1,2$, where $v_{ij}$ come from
a non-commutative ring.
The simplest example of quasideterminants of $\mathcal{V}$
are the following
\begin{align*}
|\mathcal{V}|_{11}&=v_{11}-v_{12}\cdot v_{22}^{-1}\cdot v_{21},
\quad
|\mathcal{V}|_{12}=v_{12}-v_{11}\cdot v_{21}^{-1}\cdot v_{22},
\\
|\mathcal{V}|_{21}&=v_{21}-v_{22}\cdot v_{12}^{-1}\cdot v_{11},
\quad
|\mathcal{V}|_{22}=v_{22}-v_{21}\cdot v_{11}^{-1}\cdot v_{12},
\end{align*}
which are also known as the Schur complements of the respective entries.
We will use the form $|\mathcal{V}|_{22}$ to find the quasideterminant
representations for $p_j(z)$ and $q_{j}(z)$
and use the notation $\theta_{\ast}$ to denote this fact, that is
\begin{align*}
\theta_{\ast}\mathcal{V}
=
\theta_{\ast}
\left(
  \begin{array}{cc}
    v_{11} & v_{12} \\
    v_{21} & v_{22} \\
  \end{array}
\right)
=
v_{22}-v_{21}v_{11}^{-1}v_{12}.
\end{align*}
The first step in obtaining the representations are the
orthogonality relations
\begin{align}
\label{eqn: relations-orthogonality for p-2j}
\langle p_{2j}(z), z^{k}\rangle
=
\left\{
  \begin{array}{ll}
    0, & \hbox{$k=0,1,\cdots,2j$;} \\
    d_{2j,2j+1}, & \hbox{$k=2j+1$.}
  \end{array}
\right.
\end{align}
\begin{align}
\label{eqn: relations-orthogonality for p-2j+1}
\langle p_{2j+1}(z), z^{k}\rangle
=
\left\{
  \begin{array}{ll}
    0, & \hbox{$k=0,1,\cdots,2j-1$;} \\
    d_{2j-1,2j}, & \hbox{$k=2j$.}
  \end{array}
\right.
\end{align}
that follow easily from
\eqref{eqn: relations-biorthogonality between p and q}.
We derive the quasideterminant representation for $p_{2j}(z)$ only.
The others follow similarly.
\begin{theorem}
Consider the Gram matrix
\begin{align*}
\mathcal{M}_{eo}^{[2j]}
=
\left(
  \begin{array}{cccc}
    m_{01} & m_{03} & \cdots & m_{0,2j-1} \\
    m_{21} & m_{23} & \cdots & m_{2,2j-1} \\
    \vdots & \vdots & \ddots & \vdots \\
    m_{2j-2,1} & m_{2j-2,3} & \cdots & m_{2j-2,2j-1} \\
  \end{array}
\right).
\end{align*}
Then, $p_{2j}(z)$, $j=0,1,\cdots$, has the quasideterminant representation
\begin{align}
\label{eqn: quasideterminant represtation for p-2j}
p_{2j}(z)=
\theta_{\ast}
\left(
  \begin{array}{cccc|c}
     &  & &  & \mathcal{I} \\
     &  &  & & z^2\mathcal{I} \\
     &  & \mathcal{M}_{eo}^{[2j]} & & \cdots \\
     &  &  & & z^{2j-2}\mathcal{I} \\\hline
    m_{2j,1} & m_{2j,3} &\cdots & m_{2j,2j-1} & z^{2j}\mathcal{I} \\
  \end{array}
\right).
\end{align}
\end{theorem}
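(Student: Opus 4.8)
The plan is to pin down the matrix polynomial $p_{2j}(z)$ through its orthogonality relations \eqref{eqn: relations-orthogonality for p-2j} and then to recognise the resulting coefficients as producing exactly the Schur complement displayed on the right-hand side of \eqref{eqn: quasideterminant represtation for p-2j}. First I would record the monic ansatz already established from the $\mathcal{L}\mathcal{D}\mathcal{U}$ factorization,
\[
p_{2j}(z)=z^{2j}\mathcal{I}+\sum_{s=0}^{j-1}\mathcal{L}_{2j,2s}^{(1)}z^{2s},
\]
which carries only even powers of $z$ and has the $j$ unknown block coefficients $\mathcal{L}_{2j,0}^{(1)},\dots,\mathcal{L}_{2j,2j-2}^{(1)}$.

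Next I would impose the orthogonality conditions. By left linearity of the bilinear form, $\langle p_{2j}(z),z^{k}\mathcal{I}\rangle=\sum_{s=0}^{j}\mathcal{L}_{2j,2s}^{(1)}m_{2s,k}$ with $\mathcal{L}_{2j,2j}^{(1)}=\mathcal{I}$. Since $m_{2s,k}=h_{2s+k}$ vanishes whenever $2s+k$ is even, every condition with $k$ even holds identically; this is precisely the manifestation of the even-ordered Gram entries being zero. The content therefore resides in the $j$ conditions with $k=2t+1$, $t=0,1,\dots,j-1$, namely
\[
\sum_{s=0}^{j-1}\mathcal{L}_{2j,2s}^{(1)}m_{2s,2t+1}=-\,m_{2j,2t+1},\qquad t=0,1,\dots,j-1.
\]
The decisive observation is that the coefficient array $(m_{2s,2t+1})_{s,t=0}^{j-1}$ is exactly $\mathcal{M}_{eo}^{[2j]}$. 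Collecting the unknowns into the block row vector $\mathbf{c}=(\mathcal{L}_{2j,0}^{(1)},\dots,\mathcal{L}_{2j,2j-2}^{(1)})$ and writing $\mathbf{w}^{T}=(m_{2j,1},\dots,m_{2j,2j-1})$, the system reads $\mathbf{c}\,\mathcal{M}_{eo}^{[2j]}=-\mathbf{w}^{T}$, so that $\mathbf{c}=-\mathbf{w}^{T}\big(\mathcal{M}_{eo}^{[2j]}\big)^{-1}$. Substituting back, with $\mathbf{v}=(\mathcal{I},z^{2}\mathcal{I},\dots,z^{2j-2}\mathcal{I})^{T}$, gives
\[
p_{2j}(z)=z^{2j}\mathcal{I}-\mathbf{w}^{T}\big(\mathcal{M}_{eo}^{[2j]}\big)^{-1}\mathbf{v},
\]
which is precisely $\theta_{\ast}$ of the bordered matrix in \eqref{eqn: quasideterminant represtation for p-2j}, upon reading off $v_{11}=\mathcal{M}_{eo}^{[2j]}$, $v_{12}=\mathbf{v}$, $v_{21}=\mathbf{w}^{T}$ and $v_{22}=z^{2j}\mathcal{I}$ in the defining relation $\theta_{\ast}\mathcal{V}=v_{22}-v_{21}v_{11}^{-1}v_{12}$.

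The one point requiring care — and the main obstacle — is the invertibility of $\mathcal{M}_{eo}^{[2j]}$, without which neither the Schur complement nor the solution of the linear system is defined. I would deduce it from the existence of the $\mathcal{L}\mathcal{D}\mathcal{U}$ factorization established earlier: that algorithm requires each pivot $h_{2l+1,2l}$ to be invertible, and after reordering rows and columns to separate even from odd indices the checkerboard matrix $\mathcal{M}$ acquires an off-diagonal block form whose leading principal block is $\mathcal{M}_{eo}^{[2j]}$; the latter factors through exactly the pivots $h_{1,0},\dots,h_{2j-1,2j-2}$ together with the unitriangular factors built from the $u_{2l,2k}$, so its invertibility is equivalent to invertibility of those pivots. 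This is the natural replacement, in the present even-zero setting, for the usual quasi-definiteness hypothesis, and it makes the square linear system uniquely solvable; uniqueness then guarantees that the polynomial produced by the factorization coincides with the one given by the Schur-complement formula.
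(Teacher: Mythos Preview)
Your proof is correct and follows the same route as the paper: impose the odd-index orthogonality relations \eqref{eqn: relations-orthogonality for p-2j} on the monic even-power ansatz, read off the linear system with coefficient matrix $\mathcal{M}_{eo}^{[2j]}$, solve, and identify the result as the Schur complement $\theta_{\ast}$. One small slip: when you write $m_{2s,k}=h_{2s+k}$ you are importing Hankel symmetry, which is \emph{not} assumed in this section---the vanishing of $m_{2s,k}$ for $2s+k$ even is simply the standing hypothesis on the Gram matrix---and your added justification of the invertibility of $\mathcal{M}_{eo}^{[2j]}$ via the pivots $h_{2l+1,2l}$ is a welcome point that the paper leaves implicit.
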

\begin{proof}
Since
\begin{align*}
p_{2j}(z)
=\mathcal{L}_{2j,0}^{(1)}+\mathcal{L}_{2j,2}^{(1)}z^2+
\mathcal{L}_{2j,4}^{(1)}z^4+\cdots+z^{2j},
\end{align*}
so that the orthogonality relations
\eqref{eqn: relations-orthogonality for p-2j}
give
\begin{align*}
\mathcal{L}_{2j,0}^{(1)}\langle I, z^{k}\rangle+
\mathcal{L}_{2j,2}^{(1)}\langle z^2, z^{k}\rangle+
\mathcal{L}_{2j,4}^{(1)}\langle z^4, z^{k}\rangle+
\cdots+
\langle z^{2j}, z^{k}\rangle=0,
\end{align*}
for $k=1,3,\cdots,2j-1$, which leads to
the system of equations represented as
\begin{align*}
\left(
  \begin{array}{cccc}
    \mathcal{L}_{2j,0}^{(1)} & \mathcal{L}_{2j,2}^{(1)}  & \cdots & \mathcal{L}_{2j,2j-2}^{(1)} \\
  \end{array}
\right)
\mathcal{M}_{eo}^{[2j]}
=-
\left(
  \begin{array}{cccc}
    m_{2j,1} & m_{2j,3} & \cdots & m_{2j,2j-1} \\
  \end{array}
\right).
\end{align*}
Since
\begin{align*}
p_{2j}(z)
&=
z^{2j}+\left(
  \begin{array}{ccc}
    \mathcal{L}_{2j,0}^{(1)}  & \cdots & \mathcal{L}_{2j,2j-2}^{(1)} \\
  \end{array}
\right)
\left(
  \begin{array}{c}
    I \\
  \vdots \\
    z^{2j-2}I \\
  \end{array}
\right),
\end{align*}
we have the quasideterminant representation
\eqref{eqn: quasideterminant represtation for p-2j}.
\end{proof}
Consider another Gram matrix
\begin{align*}
\mathcal{M}_{oe}^{[2j]}
=
\left(
  \begin{array}{cccc}
    m_{10} & m_{12} & \cdots & m_{1,2j-2} \\
    m_{30} & m_{32} & \cdots & m_{3,2j-2} \\
    \vdots & \vdots & \ddots & \vdots \\
    m_{2j-1,0} & m_{2j-1,2} & \cdots & m_{2j-1,2j-2} \\
  \end{array}
\right).
\end{align*}
Then, we have the following quasideterminant representations
for $j=0,1,\cdots$,
\begin{align*}
p_{2j+1}(z)=
\theta_{\ast}
\left(
  \begin{array}{cccc|c}
     &  & &  & z\mathcal{I} \\
     &  &  & & z^3\mathcal{I} \\
     &  & \mathcal{M}_{oe}^{[2j]} & & \cdots \\
     &  &  & & z^{2j-1}\mathcal{I} \\\hline
    m_{2j+1,0} & m_{2j+1,2} &\cdots & m_{2j+1,2j-2} & z^{2j+1}\mathcal{I} \\
  \end{array}
\right),
\end{align*}
Similarly, from \eqref{eqn: relations-biorthogonality between q and p}
we have for $j=0,1,\cdots$
\begin{align*}
q_{2j}(z)=
\theta_{\ast}
\left(
  \begin{array}{cccc|c}
     &  & &  & m_{1,2j} \\
     &  &  & & m_{3,2j} \\
     &  & \mathcal{M}_{oe}^{[2j]} & & \cdots \\
     &  &  & & m_{2j-1,2j} \\\hline
    \mathcal{I} & z^2\mathcal{I} &\cdots & z^{2j-2}\mathcal{I} & z^{2j}\mathcal{I} \\
  \end{array}
\right),
\end{align*}
\begin{align*}
q_{2j+1}(z)=
\theta_{\ast}
\left(
  \begin{array}{cccc|c}
     &  & &  & m_{0,2j+1} \\
     &  &  & & m_{2,2j+1} \\
     &  & \mathcal{M}_{eo}^{[2j]} & & \cdots \\
     &  &  & & m_{2j-2,2j+1} \\\hline
    z\mathcal{I} & z^3\mathcal{I} &\cdots & z^{2j-1}\mathcal{I} & z^{2j+1}\mathcal{I} \\
  \end{array}
\right),
\end{align*}
In the remaining portion of the manuscript, we will be
concerned more with the non-zero entries of all the matrices involved
rather than the actual expressions of these entries. Our focus
will be on structural relations between the various entities that we derive.
\section{The Christoffel transformation}
\label{sec: Christoffel transformation}
The underlying theme while discussing
Christoffel transformation is that it
should reflect the fact that
$p_{2j}(z)$ and $p_{2j+1}(z)$
are different as polynomials.
This is achieved by truncating
after $2k-1$ rows and columns so that we obtain matrices of
order $2k\times 2k$, thereby preserving the
$2\times 2$ block structure.

If we observe, the analysis so far has its origins in the single most important fact that
the main diagonal (and hence every alternate diagonal due to the symmetry
associated with the quadratic transformation $z\mapsto z^2$) of the Gram matrix
is the zero diagonal.
Hence, a desirable starting point could also have been the Gram matrix
$\hat{\mathcal{M}}$ defined as
\begin{align*}
\hat{\mathcal{M}}
=
\Lambda\mathcal{M}
=
\left(
  \begin{array}{ccccc}
    m_{10} & 0 & m_{12} & 0 & \cdots \\
    0 & m_{21} & 0 & m_{23} & \cdots \\
    m_{30} & 0 & m_{32} & 0 & \cdots \\
    0 & m_{41} & 0 & m_{43} & \cdots \\
    \vdots & \vdots & \vdots & \vdots & \ddots \\
  \end{array}
\right),
\quad
\Lambda=
\left(
  \begin{array}{cccc}
    0 & I & 0 & \cdots \\
    0 & 0 & I & \cdots \\
    \vdots & \vdots & \vdots & \ddots \\
  \end{array}
\right),
\end{align*}
and then proceeding with the
$\hat{\mathcal{L}}_1^{-1}\hat{\mathcal{D}}\hat{\mathcal{L}}_2^{-T}$
factorization. It turns out that
$\hat{\mathcal{L}}_1$ and
$\hat{\mathcal{L}}_2$ have the same structure as
$\mathcal{L}_1$ and $\mathcal{L}_2$ while
$\hat{\mathcal{D}}$ is a diagonal matrix.
In fact given any Gram matrix $\mathcal{G}$, the transformation
$\mathcal{G}\mapsto\hat{\mathcal{G}}:=\Lambda\mathcal{G}$
is known as the Christoffel transformation of the
Gram matrix $\mathcal{G}$ and is well studied
\cite{Manas-CD-MOP-JMAA-2014, Paco-Christoffel-MOP-2017}.
We derive some of the existing results for $z$
\cite{Manas-CD-LU-Survey-2021}
in the present case of
quadratic transformation $z\mapsto z^2$.

Since $\hat{\mathcal{M}}=\Lambda\mathcal{M}$, we have
\begin{align*}
\hat{\mathcal{L}}_1^{-1}\hat{\mathcal{D}}\hat{\mathcal{L}}_2^{-T}
=
\Lambda
\mathcal{L}_1^{-1}\mathcal{D}\mathcal{L}_2^{-T}
\Longrightarrow
\hat{\mathcal{D}}\hat{\mathcal{L}}_2^{-T}
=
\sigma
\mathcal{D}\mathcal{L}_2^{-T};
\quad
\sigma:=\hat{\mathcal{L}}_1\Lambda\mathcal{L}_1^{-1}.
\end{align*}
We call $\sigma$
the connector which is also given by the expression
$\sigma=\hat{\mathcal{D}}\hat{\mathcal{L}}_2^{-T}\mathcal{L}_2^{T}\mathcal{D}^{-1}$.
A key role played by $\sigma$, which somewhat explains the
use of the term connector, is brought out in the following result.
\begin{proposition}
Given the matrix polynomial sequence
$\{p_n(z)\}_{n=0}^{\infty}$
associated with the Gram matrix $\mathcal{M}$,
let $\{\hat{p}_n(z)\}_{n=0}^{\infty}$
be associated with the Christoffel transformation
$\hat{\mathcal{M}}$ of $\mathcal{M}$.
Then, we have
\begin{align}
\label{eqn: Christoffel-form of polynomials p-n}
\begin{split}
\hat{p}_{2j}
&=\frac{1}{z}p_{2j+1}(z),
\\
\hat{p}_{2j+1}
&=
\frac{1}{z}
\left(
p_{2j+2}(z)-p_{2j+2}(0)p_{2j}(0)^{-1}p_{2j}(z)
\right),
\quad
j=0,1,\cdots.
\end{split}
\end{align}
\end{proposition}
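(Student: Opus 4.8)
The plan is to exploit the connector $\sigma=\hat{\mathcal{L}}_1\Lambda\mathcal{L}_1^{-1}$ already introduced, turning the proposition into the task of reading off two rows of $\sigma$. The starting observation is that $\Lambda$ acts on the monomial vector as a scalar shift: since $(\Lambda\mathcal{X}(z))_i=(\mathcal{X}(z))_{i+1}=z^{i+1}\mathcal{I}=z\,(\mathcal{X}(z))_i$, we have $\Lambda\mathcal{X}(z)=z\,\mathcal{X}(z)$. Using $\mathcal{P}(z)=\mathcal{L}_1\mathcal{X}(z)$ and $\hat{\mathcal{P}}(z)=\hat{\mathcal{L}}_1\mathcal{X}(z)$ from \eqref{eqn: definition of P and Q with L1-L2-X(z)-X(omega)}, I would compute
\[
\sigma\,\mathcal{P}(z)=\hat{\mathcal{L}}_1\Lambda\mathcal{L}_1^{-1}\mathcal{P}(z)=\hat{\mathcal{L}}_1\Lambda\mathcal{X}(z)=\hat{\mathcal{L}}_1\bigl(z\,\mathcal{X}(z)\bigr)=z\,\hat{\mathcal{P}}(z),
\]
so that $z\,\hat{p}_n(z)=\sum_k\sigma_{nk}\,p_k(z)$ componentwise. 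Everything then hinges on the sparsity pattern of $\sigma$.

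The hard part is pinning down that pattern, and for this I would use both expressions for $\sigma$ simultaneously. From $\sigma=\hat{\mathcal{L}}_1\Lambda\mathcal{L}_1^{-1}$: the factor $\mathcal{L}_1^{-1}$ is lower unitriangular, premultiplication by $\Lambda$ shifts its rows up by one (creating at most one superdiagonal), and the further lower-triangular factor $\hat{\mathcal{L}}_1$ preserves this, so $\sigma$ is lower Hessenberg, $\sigma_{nk}=0$ for $k>n+1$, with unit superdiagonal $\sigma_{n,n+1}=\mathcal{I}$. Moreover, since the polynomials split by parity, $\mathcal{L}_1,\hat{\mathcal{L}}_1$ and their inverses are parity-preserving while $\Lambda$ is parity-reversing, so $\sigma_{nk}=0$ unless $n+k$ is odd. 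From the second expression $\sigma=\hat{\mathcal{D}}\hat{\mathcal{L}}_2^{-T}\mathcal{L}_2^{T}\mathcal{D}^{-1}$: the product $U:=\hat{\mathcal{D}}\hat{\mathcal{L}}_2^{-T}\mathcal{L}_2^{T}$ is upper triangular, while $\mathcal{D}^{-1}$ is block-anti-diagonal, swapping the partners $2i\leftrightarrow 2i+1$ inside each $2\times2$ block (this is exactly where the structure \eqref{eqn: form of constituent matrices} enters). Writing $\sigma_{n,2i}=U_{n,2i+1}(\mathcal{D}^{-1})_{2i+1,2i}$ and $\sigma_{n,2i+1}=U_{n,2i}(\mathcal{D}^{-1})_{2i,2i+1}$ and invoking upper-triangularity of $U$ forces $\sigma_{n,k}=0$ unless $k\ge n-1$ (indeed $k\ge n+1$ when $k$ is odd). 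Intersecting the two constraints I expect to obtain: row $2j$ of $\sigma$ has a single nonzero entry, $\sigma_{2j,2j+1}=\mathcal{I}$; row $2j+1$ has nonzero entries only at columns $2j$ and $2j+2$, with $\sigma_{2j+1,2j+2}=\mathcal{I}$.

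With the pattern in hand the two formulas drop out. The even case is immediate: $z\,\hat{p}_{2j}(z)=\sigma_{2j,2j+1}\,p_{2j+1}(z)=p_{2j+1}(z)$, which is the first line of \eqref{eqn: Christoffel-form of polynomials p-n}. For the odd case the identity reads $z\,\hat{p}_{2j+1}(z)=p_{2j+2}(z)+\sigma_{2j+1,2j}\,p_{2j}(z)$; evaluating at $z=0$, where the left side vanishes, gives $p_{2j+2}(0)+\sigma_{2j+1,2j}\,p_{2j}(0)=0$, hence $\sigma_{2j+1,2j}=-p_{2j+2}(0)\,p_{2j}(0)^{-1}$, which substituted back yields the second line. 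The main obstacle is the second paragraph: combining the Hessenberg/parity structure from $\hat{\mathcal{L}}_1\Lambda\mathcal{L}_1^{-1}$ with the opposing upper-triangular constraint carried through $\mathcal{D}^{-1}$, since the anti-diagonal blocks shift the column index by one within each block and the parity bookkeeping must be carried out exactly to conclude that only the stated columns survive. A secondary point to record is the invertibility of $p_{2j}(0)$ used at $z=0$, which follows from the quasidefiniteness of $\hat{\mathcal{M}}$. As a cross-check I would confirm that both right-hand sides are monic of the correct degree and parity and satisfy the orthogonality relations \eqref{eqn: relations-orthogonality for p-2j}--\eqref{eqn: relations-orthogonality for p-2j+1} transported through the Christoffel shift, which replaces $\langle f,g\rangle$ by $\langle z f,g\rangle$.
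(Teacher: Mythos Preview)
Your proposal is correct and follows essentially the same route as the paper: establish $\sigma\mathcal{P}(z)=z\hat{\mathcal{P}}(z)$, compare the two expressions $\sigma=\hat{\mathcal{L}}_1\Lambda\mathcal{L}_1^{-1}=\hat{\mathcal{D}}\hat{\mathcal{L}}_2^{-T}\mathcal{L}_2^{T}\mathcal{D}^{-1}$ to force the sparsity pattern of $\sigma$, then read off the two rows and evaluate at $z=0$ for the odd case. If anything, your treatment is more explicit than the paper's, which simply asserts the form of $\sigma$ ``comparing both the expressions''; your parity/Hessenberg argument from the first expression together with the upper-triangular-times-block-antidiagonal analysis from the second expression is exactly the computation the paper is alluding to.
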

\begin{proof}
Since $\mathcal{P}(z)=\mathcal{L}_1\mathcal{X}(z)$, we have
\begin{align*}
\sigma\mathcal{P}(z)
=
\hat{D}\hat{\mathcal{L}}_2^{-T}\mathcal{L}_2^{T}\mathcal{D}^{-1}\mathcal{L}_1\chi(z)
=
\hat{D}\hat{\mathcal{L}}_2^{-T}\hat{\mathcal{M}}^{-1}\Lambda\mathcal{X}(z)
=
z\hat{\mathcal{P}}(z).
\end{align*}
Further, comparing both the expressions for $\sigma$, we conclude that $\sigma$
has to be necessarily of the form
\begin{align*}
\sigma=
\left(
  \begin{array}{cccccc}
    0 & I & 0 & 0 & 0 & \cdots \\
    \sigma_{10} & 0 & I & 0 & 0 & \cdots \\
    0 & 0 & 0 & I & 0 & \cdots \\
    0 & 0 & \sigma_{32} & 0 & I & \cdots \\
    0 & 0 & 0 & 0 & 0 & \cdots \\
    \vdots & \vdots & \vdots & \vdots & \vdots & \ddots \\
  \end{array}
\right),
\quad
\sigma_{2j+1,2j}=\hat{d}_{2j+1,2j+1}d_{2j,2j+1}^{-1},
\quad
j=0,1,\cdots
\end{align*}
Then from $\sigma\mathcal{P}(z)=z\hat{\mathcal{P}}(z)$,
we obtain
\begin{align*}
p_{2j+2}(z)+\sigma_{2j+1,2j}p_{2j}(z)=z\hat{p}_{2j+1}(z),
\quad
j=0,1,\cdots.
\end{align*}
The value of $\sigma_{2j+1,2j}$ is found by substituting $z=0$
in the above relation leading to
\eqref{eqn: Christoffel-form of polynomials p-n}.
\end{proof}
We obtain in similar fashion
\begin{align}
\label{eqn: relation between q and q-hat}
\mathcal{Q}(\omega)=\mathcal{L}_2\mathcal{X}(\omega)
\Longrightarrow
\mathcal{Q}^{T}(\omega)\mathcal{D}^{-1}=
\hat{\mathcal{Q}}^{T}(\omega)\hat{\mathcal{D}}^{-1}\sigma
\end{align}
leading to structural relations between $q_k(z)$ and $\hat{q}_l(z)$.
However, because of the appearance of the expressions
$\mathcal{Q}^{T}(\omega)$ and $\hat{\mathcal{Q}}^{T}(\omega)$,
\eqref{eqn: relation between q and q-hat}
serves as the motivation to define the kernel polynomials related to the Gram
matrices $\mathcal{M}$ and $\hat{\mathcal{M}}$.
\subsection{Christoffel-Darboux kernels}
Let us first work out a special case that will serve as
motivation for the definitions.
Consider the following
\begin{align*}
\left(
  \begin{array}{cccccc}
    \mathcal{L}_{00}^{(1)} & 0 & 0 & 0  \\
    0 & \mathcal{L}_{11}^{(1)} & 0 & 0  \\
    \mathcal{L}_{20}^{(1)} & 0 & \mathcal{L}_{22}^{(1)} & 0  \\
    0 & \mathcal{L}_{31}^{(1)} & 0 & \mathcal{L}_{33}^{(1)} \\
   \end{array}
\right)
\left(
  \begin{array}{cc|cc}
    1 & 0 & 0 & 0  \\
    0 & 0 & 0 & 0  \\\hline
    0 & 0 & 1 & 0  \\
    0 & 0 & 0 & 0  \\
   \end{array}
\right)
\left(
  \begin{array}{c}
    I \\
   z I \\
   z^2I \\
   z^3I \\
  \end{array}
\right)
=
\left(
  \begin{array}{c}
    p_0(z) \\
    0 \\
    p_2(z) \\
    0 \\
  \end{array}
\right),
\end{align*}
which is to pick only $p_{2j}(z)$.
Denoting the right hand most vector as
$\mathcal{P}_e^{[4]}(z)$
we define the matrix kernel polynomial
$\mathcal{K}^{[2]}(z,\omega)$ as
\begin{align*}
\mathcal{K}^{[2]}(z,\omega)=
[\mathcal{Q}^{[4]}(\omega)]^{T}
[\mathcal{D}^{[4]}]^{-1}
\mathcal{P}_e^{[4]}(z)
=
\sum_{j=0}^{1}q_{2j+1}^{T}(\omega)d_{2j,2j+1}^{-1}p_{2j}(z).
\end{align*}
Hence, we give the following
\begin{definition}
\label{definition: kernel polynomial}
Given the biorthogonal polynomial sequences
$\{\mathcal{P}_n(z)\}_{n=0}^{\infty}$ and
$\{\mathcal{Q}_n(z)\}_{n=0}^{\infty}$ that arise from a
Gram matrix  with even ordered moments as zero,
the associated kernel polynomials are defined as
\begin{align}
\label{eqn: kernel polynomials from M}
\begin{split}
\mathcal{K}^{[2n]}(z,\omega)
&=
\sum_{j=0}^{n}q_{2j+1}^{T}(\omega)d_{2j,2j+1}^{-1}p_{2j}(z),
\\
\mathcal{K}^{[2n+1]}(z,\omega)
&=
\sum_{j=0}^{n}q_{2j}^{T}(\omega)d_{2j+1,2j}^{-1}p_{2j+1}(z),
\quad
n=0,1,\cdots.
\end{split}
\end{align}
\end{definition}
Next the left hand side of the equality
$\mathcal{Q}^{T}(\omega)\mathcal{D}^{-1}=
\hat{\mathcal{Q}}^{T}(\omega)\hat{\mathcal{D}}^{-1}\sigma$
as obtained in \eqref{eqn: relation between q and q-hat}
gives
\begin{align*}
\left(
  \begin{array}{cccc}
    q_0^{T}(\omega) & q_1^{T}(\omega) & q_{2}^{T}(\omega) & q_{3}^{T}(\omega)\\
  \end{array}
\right)
\left(
  \begin{array}{cc|cc}
    0 & d_{10}^{-1} & 0 & 0  \\
    d_{01}^{-1} & 0 & 0 & 0 \\\hline
    0 & 0 & 0 & d_{32}^{-1} \\
    0 & 0 & d_{23}^{-1} & 0 \\
   \end{array}
\right)
\left(
  \begin{array}{c}
    p_0(z) \\
    0 \\
    p_2(z) \\
    0 \\
  \end{array}
\right)
\end{align*}
which is $\mathcal{K}^{[2]}(z,\omega)$.
Similarly, the right hand side of the equality gives
\begin{align*}
\lefteqn{
\left(
  \begin{array}{cccc}
    \hat{q}_0^{T}(\omega) & \hat{q}_1^{T}(\omega) & \hat{q}_{2}^{T}(\omega) & \hat{q}_{3}^{T}(\omega)\\
  \end{array}
\right)
}
\\
&&\times
\left(
  \begin{array}{cc|cc}
    \hat{d}_{00}^{-1} & 0 & 0 & 0  \\
    0 & \hat{d}_{11}^{-1} & 0 & 0 \\\hline
    0 & 0 & \hat{d}_{22}^{-1} & 0 \\
    0 & 0 & 0 & \hat{d}_{33}^{-1} \\
   \end{array}
\right)
\left(
  \begin{array}{cc|cc}
    0 & I & 0 & 0 \\
    \sigma_{10} & 0 & I & 0 \\\hline
    0 & 0 & 0 & I \\
    0 & 0 & \sigma_{32} & 0 \\
  \end{array}
\right)
\left(
  \begin{array}{c}
    p_0(z) \\
    0 \\
    p_2(z) \\
    0 \\
  \end{array}
\right)
\end{align*}
which gives
\begin{align*}
z\sum_{j=0}^{1}\hat{q}_{2j+1}^{T}(\omega)\hat{d}_{2j+1,2j+1}^{-1}\hat{p}_{2j+1}(\omega)
-
\hat{q}_{3}(\omega)\hat{d}_{33}^{-1}p_{3}(z).
\end{align*}
Hence we give the following
\begin{definition}
Given the Gram matrix $\mathcal{M}$, let
$\{\hat{p}_n(z)\}_{n=0}^{\infty}$
and
$\{\hat{q}_n(z)\}_{n=0}^{\infty}$
be the biorthogonal polynomial sequences
associated with the Christoffel transformation
$\hat{\mathcal{M}}$ of $\mathcal{M}$.
Then, the associated kernel polynomials are defined as
\begin{align}
\label{eqn: kernel polynomials from M-hat}
\begin{split}
\hat{\mathcal{K}}^{[2n]}(z,\omega)
&=
\sum_{j=0}^{n}\hat{q}_{2j}^{T}(\omega)\hat{d}_{2j,2j}^{-1}\hat{p}_{2j}(z),
\\
\hat{\mathcal{K}}^{[2n+1]}(z,\omega)
&=
\sum_{j=0}^{n}\hat{q}_{2j+1}^{T}(\omega)\hat{d}_{2j+1,2j+1}^{-1}\hat{p}_{2j+1}(z),
\quad
n=0,1,\cdots.
\end{split}
\end{align}
 \end{definition}
The preceding discussion immediately gives
\begin{proposition}
The kernel polynomials $\mathcal{K}^{[j]}(z,\omega)$ and
$\hat{\mathcal{K}}^{[j]}(z,\omega)$
associated, respectively, with $\mathcal{M}$ and $\hat{\mathcal{M}}$
are related as
\begin{align}
\label{eqn: relation between kernel polynomials}
\begin{split}
\mathcal{K}^{[2n]}(z,\omega)
&=
z\hat{\mathcal{K}}^{[2n+1]}(z,\omega)-
\hat{q}_{2n+1}(\omega)\hat{d}_{2n+1,2n+1}^{-1}p_{2n+2}(z),
\\
\mathcal{K}^{[2n+1]}(z,\omega)
&=
z\hat{\mathcal{K}}^{[2n]}(z,\omega),
\quad
n=0,1,\cdots.
\end{split}
\end{align}
\end{proposition}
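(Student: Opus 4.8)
The plan is to derive both identities by sandwiching the master relation \eqref{eqn: relation between q and q-hat}, namely $\mathcal{Q}^{T}(\omega)\mathcal{D}^{-1} = \hat{\mathcal{Q}}^{T}(\omega)\hat{\mathcal{D}}^{-1}\sigma$, between the row vector $\mathcal{Q}^{T}(\omega)$ on the left and a suitably truncated ``picking'' column vector in $z$ on the right, exactly as in the worked case of $\mathcal{K}^{[2]}$ preceding Definition~\ref{definition: kernel polynomial}. The two relations differ only in whether the picking vector selects the even-indexed polynomials $p_{2j}(z)$ or the odd-indexed ones $p_{2j+1}(z)$, and the asymmetry between them will come entirely from how the boundary term behaves under truncation.

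For the first relation I would take the column vector $\mathcal{P}_e^{[2n+2]}(z)$ whose entries at the even positions $0,2,\dots,2n$ are $p_{0}(z),p_{2}(z),\dots,p_{2n}(z)$ and whose odd entries vanish, and evaluate $\mathcal{Q}^{T}(\omega)\mathcal{D}^{-1}\mathcal{P}_e^{[2n+2]}(z)$ in two ways. Reading it through the left-hand side, the anti-diagonal $2\times 2$ blocks of $\mathcal{D}$ in \eqref{eqn: form of constituent matrices} invert to blocks carrying $d_{2j,2j+1}^{-1}$ off the diagonal, which pairs each surviving $p_{2j}(z)$ with $q_{2j+1}^{T}(\omega)$; this reproduces precisely $\mathcal{K}^{[2n]}(z,\omega)$ as in \eqref{eqn: kernel polynomials from M}. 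Reading it through the right-hand side, I would apply $\sigma$ to the picking vector using the explicit band structure of $\sigma$ and the identity $\sigma\mathcal{P}(z)=z\hat{\mathcal{P}}(z)$ established in the preceding proposition: each odd row $2j+1$ with $j<n$ returns the full combination $\sigma_{2j+1,2j}p_{2j}(z)+p_{2j+2}(z)=z\hat{p}_{2j+1}(z)$, while the final odd row $2n+1$ loses its $p_{2n+2}(z)$ summand to the truncation and returns $\sigma_{2n+1,2n}p_{2n}(z)=z\hat{p}_{2n+1}(z)-p_{2n+2}(z)$. Contracting against the now-diagonal $\hat{\mathcal{D}}^{-1}$ and $\hat{\mathcal{Q}}^{T}(\omega)$ then yields $z\hat{\mathcal{K}}^{[2n+1]}(z,\omega)-\hat{q}_{2n+1}(\omega)\hat{d}_{2n+1,2n+1}^{-1}p_{2n+2}(z)$, and equating the two readings is the first claim.

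For the second relation I would instead use the odd-picking vector $\mathcal{P}_o^{[2n+2]}(z)$, nonzero only at the odd positions $1,3,\dots,2n+1$ with entries $p_{1}(z),p_{3}(z),\dots,p_{2n+1}(z)$. The left-hand reading again pairs $p_{2j+1}(z)$ with $q_{2j}^{T}(\omega)$ through the off-diagonal $d_{2j+1,2j}^{-1}$, reproducing $\mathcal{K}^{[2n+1]}(z,\omega)$. For the right-hand reading, the crucial simplification is that applying $\sigma$ now activates only the even rows of $\sigma$, each of which carries a single $\mathcal{I}$ and returns $p_{2j+1}(z)=z\hat{p}_{2j}(z)$, whereas the odd rows evaluate to zero because they couple only to the (vanishing) even entries of $\mathcal{P}_o^{[2n+2]}(z)$. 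In particular the boundary entry $p_{2n+2}(z)$ that produced the correction before now sits in an odd row and is annihilated, so no extra summand survives and the right-hand reading is exactly $z\hat{\mathcal{K}}^{[2n]}(z,\omega)$.

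The only delicate point, and the step I would treat most carefully, is the bookkeeping at the truncation boundary: one must verify that in the even case the discarded entry $p_{2n+2}(z)$ lands in the odd row $2n+1$, where $\hat{\mathcal{D}}^{-1}$ pairs it with $\hat{q}_{2n+1}$ and it therefore contributes the stated correction, while in the odd case the corresponding discarded entry falls in a row that is already zero. Everything else---the inversion of the anti-diagonal blocks of $\mathcal{D}$ into the crossed pairings defining $\mathcal{K}$, and the reuse of $\sigma\mathcal{P}(z)=z\hat{\mathcal{P}}(z)$---is a direct transcription of the $\mathcal{K}^{[2]}$ computation already carried out, so the argument should close immediately once the boundary accounting is pinned down.
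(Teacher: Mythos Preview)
Your proposal is correct and follows exactly the paper's approach: the paper's proof merely says to extend the worked $\mathcal{K}^{[2]}$ computation to truncation after $2n+1$ rows and columns for the first identity, and to swap the even-picking vector for the odd-picking vector $(0,p_1(z),0,p_3(z),\dots,0,p_{2n+1}(z))^T$ for the second. You have simply written out these two steps in full, including the boundary bookkeeping that explains the extra $-\hat q_{2n+1}(\omega)\hat d_{2n+1,2n+1}^{-1}p_{2n+2}(z)$ term in the first relation and its absence in the second, which the paper leaves implicit.
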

\begin{proof}
The first relation follows from extending the particular case
above to matrices truncated after (2n+1) rows and columns.
The second relation follows on the same lines as the first one
except for the fact that we replace 
$(p_0(z), 0, p_{2}(z), 0, \cdots, p_{2n}(z), 0)^{T}$
with the vector
$(0, p_1(z), 0, p_{3}(z), \cdots,0, p_{2n+1}(z))^{T}$.
\end{proof}
Next, we derive the ABC Theorems for both $\mathcal{M}$
and $\hat{\mathcal{M}}$. We will use the following three matrices
\begin{align*}
\Theta^{[2n]}
&=
\left(
  \begin{array}{ccccc}
    0 & 0 & \cdots & 0 & \mathcal{I} \\
    \mathcal{I} & 0 & \cdots & 0 & 0 \\
    \vdots & \ddots & \ddots & \vdots & \vdots \\
    0 & 0 & \cdots & 0 & 0 \\
    0 & 0 & \cdots & \mathcal{I} & 0  \\
  \end{array}
\right),
\quad
(\Theta^{[2n]})^{-1}=(\Theta^{[2n]})^{T},
\\
\Pi_e
&=
\left(
   \begin{array}{ccccc}
     \vec{e}_0 & \vec{0} & \vec{e}_2 & \vec{0} & \cdots \\
     \downarrow & \downarrow & \downarrow & \downarrow & \cdots \\
   \end{array}
\right)
\quad\mbox{and}\quad
\Pi_o
=
\left(
   \begin{array}{ccccc}
     \vec{0} & \vec{e}_1 & \vec{0} & \vec{e}_3 & \cdots \\
     \downarrow & \downarrow & \downarrow & \downarrow & \cdots \\
   \end{array}
\right),
 \end{align*}
where $\vec{e}_i=(0,\cdots,i,\cdots,0)$, $i=0,1,\cdots$ are the canonical vectors
and $\vec{0}$ is the zero vector. The arrows indicate that the $(\vec{e}_i)'s$ are written
as columns in $\Pi_e$ and $\Pi_o$.
\begin{theorem}
[ABC Theorem]
\label{theorem: theorem ABC for M}
The matrix kernel polynomials associated with the
Gram matrix $\mathcal{M}$ have the representation
\begin{align*}
\mathcal{K}^{[2n]}(z,\omega)
&=(\mathcal{X}^{[2n]}(\omega))^{T}\Pi_o^{[2n]}(\mathcal{M}_e^{[2n]})^{-1}
\Pi_e^{[2n]}\mathcal{X}^{[2n]}(z),
\\
\mathcal{K}^{[2n+1]}(z,\omega)
&=(\mathcal{X}^{[2n]}(\omega))^{T}\Pi_e^{[2n]}(\mathcal{M}_o^{[2n]})^{-1}
\Pi_o^{[2n]}\mathcal{X}^{[2n]}(z),
\end{align*}
where $\mathcal{M}_e^{[2n]}=
(\mathcal{L}_1^{[2n]})^{-1}\mathcal{D}_e^{[2n]}
(\mathcal{L}_2^{[2n]})^{-T}$
and
$\mathcal{M}_o^{[2n]}=
(\mathcal{L}_1^{[2n]})^{-1}\mathcal{D}_o^{[2n]}
(\mathcal{L}_2^{[2n]})^{-T}$
with
\begin{align*}
\mathcal{D}_e^{[2n]}
&=
{\rm diag}~(d_{0,1},d_{1,0},\cdots,d_{2n,2n+1},d_{2n+1,2n})\Theta^{[2n]},
\\
\mathcal{D}_o^{[2n]}
&=
\Theta^{[2n]}{\rm diag}~(d_{1,0},d_{0,1},\cdots,d_{2n+1,2n},d_{2n,2n+1}).
\end{align*}
\end{theorem}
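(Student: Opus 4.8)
The plan is to start from the right-hand side of each claimed identity and reduce it, via the $\mathcal{LDU}$ data, to the defining sum of the relevant kernel in \eqref{eqn: kernel polynomials from M}. I treat the even case $\mathcal{K}^{[2n]}$ in full; the odd case is the mirror image with $\Pi_e,\Pi_o$ and $\mathcal{D}_e,\mathcal{D}_o$ interchanged. Since $\mathcal{M}_e^{[2n]}=(\mathcal{L}_1^{[2n]})^{-1}\mathcal{D}_e^{[2n]}(\mathcal{L}_2^{[2n]})^{-T}$, inversion gives $(\mathcal{M}_e^{[2n]})^{-1}=(\mathcal{L}_2^{[2n]})^{T}(\mathcal{D}_e^{[2n]})^{-1}\mathcal{L}_1^{[2n]}$, so the right-hand side reads $(\mathcal{X}^{[2n]}(\omega))^{T}\Pi_o^{[2n]}(\mathcal{L}_2^{[2n]})^{T}(\mathcal{D}_e^{[2n]})^{-1}\mathcal{L}_1^{[2n]}\Pi_e^{[2n]}\mathcal{X}^{[2n]}(z)$.

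The essential mechanism is that $\mathcal{L}_1$ and $\mathcal{L}_2$ preserve the even/odd grading of the index: by the form of $\mathcal{L}$ recorded in Section~\ref{sec: Generic LDU}, their only nonzero blocks are even--even and odd--odd, so each commutes with the parity projections $\Pi_e,\Pi_o$ (which are symmetric diagonal projections onto the even and odd coordinate slots). Hence $\mathcal{L}_1\Pi_e\mathcal{X}(z)=\Pi_e\mathcal{L}_1\mathcal{X}(z)=\Pi_e\mathcal{P}(z)$ and, taking transposes, $(\mathcal{X}(\omega))^{T}\Pi_o\mathcal{L}_2^{T}=\mathcal{Q}^{T}(\omega)\Pi_o$. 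Substituting, the right-hand side collapses to $\mathcal{Q}^{T}(\omega)\,\Pi_o\,(\mathcal{D}_e^{[2n]})^{-1}\,\Pi_e\,\mathcal{P}(z)$, i.e.\ the pairing of $\mathcal{P},\mathcal{Q}$ against the odd-row/even-column part of $(\mathcal{D}_e^{[2n]})^{-1}$. This uses crucially that $p_{2j},q_{2j}$ carry only even powers while $p_{2j+1},q_{2j+1}$ carry only odd powers, so the projected vectors are genuinely $\Pi_e\mathcal{P}=(p_0,0,p_2,0,\dots)^{T}$ and $\Pi_o\mathcal{Q}=(0,q_1,0,q_3,\dots)^{T}$.

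It then remains to read off $\Pi_o(\mathcal{D}_e^{[2n]})^{-1}\Pi_e$. Writing $(\mathcal{D}_e^{[2n]})^{-1}=(\Theta^{[2n]})^{T}\,{\rm diag}(d_{0,1}^{-1},d_{1,0}^{-1},\dots,d_{2n,2n+1}^{-1},d_{2n+1,2n}^{-1})$ via $(\Theta^{[2n]})^{-1}=(\Theta^{[2n]})^{T}$, a short computation identifies the surviving $(2j+1,2j)$ block as $d_{2j,2j+1}^{-1}$ and shows every other odd-row/even-column block vanishes. Therefore the pairing telescopes to $\sum_{j=0}^{n}q_{2j+1}^{T}(\omega)\,d_{2j,2j+1}^{-1}\,p_{2j}(z)=\mathcal{K}^{[2n]}(z,\omega)$ by \eqref{eqn: kernel polynomials from M}. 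The odd identity runs identically: $\mathcal{D}_o^{[2n]}=\Theta^{[2n]}{\rm diag}(d_{1,0},d_{0,1},\dots)$ yields $d_{2j+1,2j}^{-1}$ in the $(2j,2j+1)$ block of its inverse, and the pairing $\mathcal{Q}^{T}\Pi_e(\mathcal{D}_o^{[2n]})^{-1}\Pi_o\mathcal{P}=\sum_{j}q_{2j}^{T}d_{2j+1,2j}^{-1}p_{2j+1}=\mathcal{K}^{[2n+1]}$.

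The step I expect to be the main obstacle is this last bookkeeping: because $\mathcal{D}$ is block-\emph{anti}-diagonal rather than diagonal, each $2\times2$ block contributes two distinct inverse blocks, $d_{2j,2j+1}^{-1}$ and $d_{2j+1,2j}^{-1}$, and one must verify that the particular ordering of the diagonal factors in $\mathcal{D}_e^{[2n]}$ versus $\mathcal{D}_o^{[2n]}$, combined with the permutation $\Theta^{[2n]}$ and the projections $\Pi_e,\Pi_o$, selects exactly the intended factor in each case. Additional care is needed at the truncation boundary, where the $2\times2$ block pattern is cut, to confirm that no spurious term survives and that the resulting sums run over precisely $j=0,\dots,n$.
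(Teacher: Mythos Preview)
Your approach is essentially the same as the paper's: both rest on the observation that the parity-graded structure of $\mathcal{L}_1,\mathcal{L}_2$ yields $\mathcal{L}_1^{[2n]}\Pi_e^{[2n]}\mathcal{X}^{[2n]}(z)=(p_0,0,p_2,0,\dots)^T$ and $\mathcal{L}_2^{[2n]}\Pi_o^{[2n]}\mathcal{X}^{[2n]}(\omega)=(0,q_1,0,q_3,\dots)^T$, after which the permutation $\Theta^{[2n]}$ aligns the blocks so that the pairing reproduces the defining sum \eqref{eqn: kernel polynomials from M}. The only difference is direction---the paper builds the representation starting from the kernel, whereas you invert $\mathcal{M}_e^{[2n]}$ and simplify the right-hand side back to the kernel; your explicit statement of the commutation $\mathcal{L}_i\Pi_{e/o}=\Pi_{e/o}\mathcal{L}_i$ is what the paper uses implicitly when writing down those projected vectors.
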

\begin{proof}
We prove the expression only for $\mathcal{K}^{[2n]}(z,\omega)$.
Along with the following relations
\begin{align*}
\mathcal{L}_1^{[2n]}\Pi_e^{[2n]}\mathcal{X}^{[2n]}(z)
&=
\left(
  \begin{array}{ccccc}
    p_0(z) & 0 & \cdots & p_{2n}(z) & 0 \\
  \end{array}
\right)^{T},
\\
\mathcal{L}_2^{[2n]}\Pi_0^{[2n]}\mathcal{X}^{[2n]}(\omega)
&=
\left(
  \begin{array}{ccccc}
    0 & q_{1}(\omega) & \cdots & 0 & q_{2n+1}(\omega) \\
  \end{array}
\right)^{T},
\end{align*}
we note that the kernel polynomial $\mathcal{K}^{[2n]}(z,\omega)$ contains
terms involving $q_{2j+1}(\omega)$ and $p_{2j}(z)$.
Hence, we need to make one
more transformation that interchanges $0$ and $q_{2j+1}(\omega)$,
which we do by post-multiplying
$\mathcal{L}_2^{[2n]}\Pi_0^{[2n]}\mathcal{X}^{[2n]}(\omega)$
with the matrix $\Theta^{[2n]}$.
Since
$
\mathcal{K}^{[2n]}(z,\omega)
=
[\mathcal{L}_2^{[2n]}\Pi_o^{[2n]}\mathcal{X}(\omega)]^{T}
[\Theta^{[2n]}]^{T}
(\mathcal{D}^{[2n]})^{-1}
\mathcal{L}_1^{[2n]}\Pi_e^{[2n]}\mathcal{X}^{[2n]}(z),
$
the expression for $\mathcal{K}^{[2n]}(z,\omega)$ follows.
\end{proof}
%
%
%
%
\section{Illustration: Hankel symmetry}
\label{sec: Hankel symmetry}
We begin this section with the question:
when does the Gram matrix $\mathcal{M}$
given by
\eqref{eqn: definition of Gram matrix with even zero}
have a factorization of the form
$\mathcal{M}=\mathcal{L}\mathcal{D}\mathcal{L}^{T}$?
We consider the part of the matrix $\mathcal{M}$ above the diagonal
%
and look for a map that transforms the condensed matrix
$\tilde{\mathcal{M}}$ to the block structured matrix $\mathcal{M}$,
$\tilde{\mathcal{M}}\rightsquigarrow\mathcal{M}$,
as given below
\begin{align*}
\left(
  \begin{array}{ccc}
    m_{0,1} & m_{0,3} & \cdots \\
    m_{1,2} & m_{1,4} & \cdots \\
    \vdots & \vdots  & \ddots \\
  \end{array}
\right)
\rightsquigarrow
\left(
  \begin{array}{cc|cc|c}
    0 & m_{01} & 0 & m_{03}  & \cdots \\
    m_{10} & 0 & m_{12} & 0  & \cdots \\\hline
    0 & m_{21} & 0 & m_{23}  & \cdots \\
    m_{30} & 0 & m_{32} & 0  & \cdots \\\hline
   \vdots & \vdots & \vdots  & \vdots & \ddots \\
  \end{array}
\right).
\end{align*}
If we impose Hankel symmetry so that $m_{i,j}=m_{k,l}$
if $i+j=k+l$, we may interpret the above map as
each entry $m_{i,j}$ being mapped to a block matrix on
the right, which implies that
we need to use the Kronecker product
\begin{align*}
\tilde{\mathcal{M}}\otimes\mathcal{J}_2,
\quad
\mbox{where}
\quad
\mathcal{J}_2=\left(
                \begin{array}{cc}
                  0 & I \\
                  I & 0 \\
                \end{array}
              \right).
\end{align*}
So, if $\tilde{\mathcal{M}}=\tilde{\mathcal{L}}
\tilde{\mathcal{D}}\tilde{\mathcal{L}}^{T}$,
then we have
$
\tilde{\mathcal{M}}\otimes\mathcal{J}_2
=
\tilde{\mathcal{L}}\tilde{\mathcal{D}}\tilde{\mathcal{L}}^{T}\otimes\mathcal{J}_2
$.
The use of Kronecker product along with the following properties
\begin{align}
\label{eqn: Kronecker relations}
\begin{split}
(A\otimes B)(C\otimes D)
&=AC\otimes BD,
\quad
(A\otimes B)^{T}=A^{T}\otimes B^{T},
\\
(A\otimes B)^{-1}
&=A^{-1}\otimes B^{-1}
\end{split}
\end{align}
simplifies the transition to the block structure to a great extent and
we get
\begin{align*}
(\mathcal{L}\otimes I)(\mathcal D\otimes\mathcal{J}_2)(\mathcal{L}^{T}\otimes I)=
(\mathcal{L}\mathcal{D}\otimes\mathcal{J}_2)(\mathcal{L}^{T}\otimes I)=
\mathcal{L}\mathcal{D}\mathcal{L}^{T}\otimes\mathcal{J}_2.
\end{align*}
Consequently we obtain the following
\begin{align*}
\tilde{M}=\tilde{\mathcal{L}}\tilde{\mathcal{D}}\tilde{\mathcal{L}}^{T}
\Longrightarrow
\mathcal{M}=
(\tilde{\mathcal{L}}\otimes I)
(\tilde{\mathcal{D}}\otimes\mathcal{J}_2)
(\tilde{\mathcal{L}}^{T}\otimes I)
=
\mathcal{L}_1^{-1}\mathcal{D}\mathcal{L}_2^{-T},
\end{align*}
which provides the relation between the respective factorizations.
Further, with \eqref{eqn: Kronecker relations} if we let
$\mathcal{L}_1^{-1}=\tilde{\mathcal{L}}\otimes \mathcal{I}$
and $\mathcal{L}_2^{-T}=(\tilde{\mathcal{L}}^{T}\otimes I)$,
then
$\mathcal{L}_1=\mathcal{L}_2=\tilde{\mathcal{L}}^{-1}\otimes \mathcal{I}$.
The polynomial sequence $\{p_n(z)\}_{n=0}^{\infty}$
is generated from
$\mathcal{L}_1\mathcal{X}(z)=(\tilde{\mathcal{L}}^{-1}\otimes \mathcal{I})\mathcal{X}(z)$
as
\begin{align*}
\left(
  \begin{array}{cc|cc|c}
    \tilde{\mathcal{L}}_{00} & 0  & 0 & 0  & \cdots \\
    0 & \tilde{\mathcal{L}}_{00} & 0 & 0  & \cdots \\\hline
    \tilde{\mathcal{L}}_{10} & 0 & \tilde{\mathcal{L}}_{11} & 0  & \cdots \\
    0 & \tilde{\mathcal{L}}_{10} & 0 & \tilde{\mathcal{L}}_{11}  & \cdots \\\hline
    \vdots & \vdots & \vdots & \vdots  & \ddots \\
  \end{array}
\right)
\left(
  \begin{array}{c}
    I \\
    zI \\
    z^2I \\
    z^3I \\
  \vdots \\
  \end{array}
\right)
=
\left(
  \begin{array}{c}
    p_0(z) \\
    p_1(z )\\
    p_2(z ) \\
    p_3(z ) \\
    \vdots \\
  \end{array}
\right),
\end{align*}
so that we have the forms
\begin{align*}
p_{2j}(z)
&=
\tilde{\mathcal{L}}_{j,j}z^{2j}+\tilde{\mathcal{L}}_{j,j-1}z^{2j-2}+\cdots+
\tilde{\mathcal{L}}_{j,1}z^2+\tilde{\mathcal{L}}_{j,0},
\\
p_{2j+1}(z)
&=
zp_{2j}(z),
\quad
j=0,1,\cdots.
\end{align*}
Hence, if we assume that the condensed matrix $\tilde{\mathcal{M}}$
is positive-definite, or in other words its entries are the moments
coming from a determined matrix Hamburger moment problem, the
factorization
$\tilde{\mathcal{M}}=\tilde{\mathcal{L}}\tilde{\mathcal{D}}\tilde{\mathcal{U}}$
exists and all of the above results follow through.
We also note that the matrices
$\mathcal{M}_{eo}^{[2j]}$ and $\mathcal{M}_{oe}^{[2j]}$
used in the quasi-determinant representations of $p_{n}(z)$ and $q_{n}(z)$
are nothing but truncations of $\tilde{\mathcal{M}}$ and hence are invertible.

This also bring us to the following observation
that because of the underlying Hankel symmetry, we have
$q_j(z)=p_j(z)$, $j=0,1,\cdots$, where $q_j(z)$ is obtained from
$\mathcal{L}_2\mathcal{X}(z)$.
Further, if we denote
$\tilde{\mathcal{D}}=
\mbox{diag}~(\tilde{d}_{0,0},\, \tilde{d}_{1,1},\,\cdots)$,
then $d_{2j,2j+1}=d_{2j+1,2j}=\tilde{d}_{jj}$, $j=0,1,\cdots$.
The biorthogonal relations
\eqref{eqn: relations-biorthogonality between p and q}
immediately yield
\begin{align}
\label{eqn: relations-biorthogonality between p and q in Hankel case}
\begin{split}
\langle p_{2j}, p_k\rangle &=d_{j,j}\delta_{2j+1,k},
\\
\langle p_{2j+1}, p_k\rangle &=d_{j,j}\delta_{2j,k},
\quad
j,k=0,1,2,\cdots,
\end{split}
\end{align}
which shows that $\{p_n(z)\}_{n=0}^{\infty}$ is a polynomial sequence
that is biorthogonal to itself with respect to the bilinear form
$\langle\cdot,\cdot\rangle$, which is, thus, not quasi-definite.
In the scalar case, the relations
\eqref{eqn: relations-biorthogonality between p and q in Hankel case}
exist and such systems are called almost orthogonal, arising in works related to
indefinite analogues of the Hamburger and Stieltjes moment problems.
We refer the reader to
\cite[Sections~2, 3]{Maxim-polynomial-mapping-JAT-2013}
for necessary references in this direction
and a view of the unwrapping of measures via
continued fractions.
Further, some of the results in the present section
that are reduced to special forms due to Hankel symmetry
are also presented
\cite{Maxim-polynomial-mapping-JAT-2013},
though in the scalar case,
where the approach is through the $\mathcal{L}\mathcal{U}$
decomposition of the underlying Jacobi matrices.

We end with the special forms of the kernel polynomials and the ABC Theorem
in case of Hankel symmetry.
From \eqref{eqn: kernel polynomials from M},
the kernel polynomials associated to
Gram matrix $\mathcal{M}$ are given by
\begin{align}
\label{eqn: kernel polynomials-Hankel symmetry}
\begin{split}
\mathcal{K}^{[2n]}(z,\omega)
&=
\omega\sum_{j=0}^{n}p_{2j}^{T}(\omega)\tilde{d}_{j,j}^{-1}p_{2j}(z),
\\
\mathcal{K}^{[2n+1]}(z,\omega)
&=
z\sum_{j=0}^{n}p_{2j}^{T}(\omega)\tilde{d}_{j,j}^{-1}p_{2j}(z),
\quad
n=0,1,\cdots,
\end{split}
\end{align}
having representations as obtained in
Theorem~\ref{theorem: theorem ABC for M}.
We only note that $\mathcal{M}_e^{[2n]}$ and
$\mathcal{M}_o^{[2n]}$ are still not presented in the form of a
$\mathcal{L}\mathcal{D}\mathcal{U}$ decomposition since
$\mathcal{D}_{e}^{[2n]}$ and $\mathcal{D}_{o}^{[2n]}$
are not diagonal.
This is perhaps because the above forms
\eqref{eqn: kernel polynomials-Hankel symmetry},
even in case of Hankel symmetry, appear as
forward shifts in $\omega$ and $z$ respectively and
needs to be investigated further.

\end{document}